\newtheorem{lemma}{Lemma}
\newtheorem{theorem}{Theorem}
\newtheorem{corollary}{Corollary}
\newtheorem{definition}{Definition}
\newtheorem{example}{Example}
\newtheorem{remark}{Remark}
\newcommand{\ds}{\displaystyle}
\newcommand{\dss}{\displaystyle\sum}
\newcommand{\lp}{\left(}
\newcommand{\rp}{\right)}
\DeclarePairedDelimiter{\abs}{\lvert}{\rvert}%
\title{Ollivier Ricci-flow on weighted graphs}
\author{
Shuliang Bai 
\and 
Yong Lin
\and
Linyuan Lu
\and 
Zhiyu Wang 
\and 
Shing-Tung Yau}
\thanks{Shuliang Bai is supported in part by NSFC grant number 12301434. Yong Lin is supported in part by NSFC grant number 12071245.
Linyuan Lu is supported in part by NSF grant DMS 2038080.}
\date{}
\begin{document}
\maketitle
\begin{abstract}
We study the existence of solutions of Ricci flow equations of Ollivier-Lin-Lu-Yau curvature defined on weighted graphs. Our work is 
 motivated by\cite{NLLG} in which the
 discrete time Ricci flow algorithm has been applied successfully as a discrete geometric approach in detecting communities.  Our main result is the existence and uniqueness theorem for solutions to a continuous time normalized Ricci flow. We also display  possible solutions to the Ricci flow on path graph and prove the Ricci flow on finite star graph with at least three leaves converges to constant-weighted star. 

\end{abstract}

Keywords: Ricci flow,  Discrete Ricci curvature, Uniqueness problem.

\section{Introduction}

Ricci flow which was introduced by Richard Hamilton \cite{hami1982} in 1980s is a process that deforms the metric of a Riemannian manifold in a way formally analogous to the diffusion of heat.    
On Riemannian manifolds $M$ with a smooth Riemannian metric  $g$,  the geometry of $(M, g)$ is altered by changing the metric $g$ via a second-order nonlinear PDE on symmetric $(0, 2)$-tensors: \begin{align}\label{equ:manifoldRicciflow}
\frac{d}{d t}g_{ij} = -2R_{ij},
\end{align}
where $R_{ij}$ is the Ricci curvature.
A solution to a Ricci flow is a one-parameter family of metrics $g(t)$ on a smooth manifold $M$, defined on a time interval $I$, and satisfying equation (\ref{equ:manifoldRicciflow}).
Intuitively, Ricci flow smooths the metric, but can lead to singularities that can be removed.  This procedure is known as surgery.   Ricci flow(and surgery) were used in an astonishing manner in the landmark work of Perleman \cite{perelman2002entropy} for solving the Poincar\'e conjecture, as well as in the proof of the differentiable sphere theorem by Simon Brendle and Richard Schoen \cite{spheretheo}.

One might image such powerful method can be applied to discrete geometry, where objects are irregular complex networks.
In 2019,  \cite{NLLG} claim good community detection on networks using Ricci flow defined on weighted graphs.  
The algorithm in \cite{NLLG} makes use of discrete Ricci flow based on Ollivier Ricci curvature which was introduced in \cite{Ollivier1, Ollivier} and  an analogous surgery procedure to partition networks which are modeled as weighted graphs.
The discrete Ricci flow deforms edge weights as time progresses:
edges of large positive Ricci curvature (i.e., sparsely traveled edges) will shrink and edges of very negative Ricci curvature (i.e., heavily traveled edges) will be stretched. By iterating the Ricci flow process, the heavily traveled edges are identified and thus communities can be partitioned. 
This approach has successfully detected communities for various networks including Zachary's Karate Club graph, Network of American football games, Facebook Ego Network, etc. The paper is beautiful in applications but lack of solid mathematical results/theorems. There are several fundamental questions needed to be addressed:
\begin{enumerate}
    \item What are intrinsic metric/curvature in graphs?
    \item Is the solution of Ricci-flow equation always exists? At what domain?
    \item If the limit object of the Ricci-flow exist?  Do they have constant curvature?
\end{enumerate}
The goal of this paper is to give a mathematical framework so that these essential questions can be answered rigorously.

We start with a brief explanation of the idea of discrete Ricci flow in \cite{NLLG} which contains two parts: the geometric meaning of Ollivier Ricci curvature and the role of Ricci flow.

In the setting of graphs, the Ollivier Ricci curvature is based on optimal transport of  probability measures associated to a lazy random walk. To generalize the idea behind Ricci curvature on manifolds to discrete space,  the spheres are replaced by probability measures $\mu_x, \mu_y$ defined on one-step neighborhood of vertices $x, y$. Measures will be transported by a distance equal to $(1- \kappa_{xy})d(x, y)$, where $\kappa_{xy}$ represents the Ollivier curvature along the geodesic segment $xy$. A natural choice for the distance between measures $\mu_x, \mu_y$ is the Wasserstein transportation metric $W_1$. Therefore, the Ollivier’s curvature is defined as: $\kappa_{xy} = 1- \frac{W_1(\mu_x, \mu_y)}{d(x, y)}$.
By this notion, positive Ollivier Ricci curvature implies that the neighbors of the two centers are close or overlapping, negative Ollivier Ricci curvature implies that the neighbors of two centers are further apart, and zero Ollivier Ricci curvature or near-zero curvature implies that the neighbors are locally embeddable in a flat surface. The Ollivier Ricci curvature can be generalized to weighted graph $(V, E, w)$ where $w$ indicates the edge weights. There has been various generalized versions, while the probability measure may vary the essence using optimal transport theory remains unchanged.

The discrete Ricci flow algorithm in \cite{NLLG} on weighted graphs is an evolving process. In each iteration, the Ricci flow process generate a time dependent family of weighted graph $(V, E, w(t))$ such that the weight $w_{ij}(t)$ on edge $ij$ changes proportional to the Ollivier Ricci curvature $\kappa_{ij}(t)$ at edge $ij$ at time $t$.
Ollivier\cite{Ollivier} suggested to use the following formula for Ricci flow with continuous time parameter $t$:
\begin{align}\label{equ:OllivierSuggest} 
\frac{d}{dt}w_{e}(t)= -\kappa_{e}(t) w_{e}(t),
\end{align}
where  $e\in E$,  $\kappa_e$ represents the Ollivier Ricci curvature on $e$ and $w_e$ indicates the length of edge $e$.
Then \cite{NLLG} uses the following formula for Ricci flow with discrete time $t$:
\begin{align}\label{equ:simpleRiccilfow}
    w_{ij}(t+1) = (1-\epsilon \kappa_{ij}(t)) d^t(i, j),
\end{align}
where  $d^{t}(i, j)$ is the associated distance at time $t$, i.e the shortest path length  between $i, j$ and $\kappa_{ij}(t)$ is the Ollivier Ricci curvature on edge $(i, j)$ at time $t$.  Observe such an iteration process, the Ricci flow enlarges the weights on negatively curvatured edges and shrink the weights on positively curvatured edges over time. By iterating the Ricci flow process, edges with high weights are detected so that can be removed by a surgery processes. As a result, the network is naturally partitioned into different communities with relatively large Ricci curvature.

Motivated by work in \cite{NLLG}, we propose a theoretic framework for the Ricci flow equations defined on weighted graphs. 
Since the Ricci  flow (\ref{equ:OllivierSuggest} ) does not preserve the sum of edge length of $G$, which would possibly lead to that the graph becomes infinitesimal in the limit if the initial metric satisfies a certain conditions, see an example in Section \ref{sec:examples}.  To avoid this,   we define the normalized Ricci flow:
\begin{align}\label{equ:ourRiccilfow}
    \frac{d}{dt} w_e(t)= -\kappa_e(t) w_e(t) +  w_e(t)  \dss_{h \in E(G)} \kappa_h(t)  w_h(t).
\end{align}
Here we adopt the Ollivier-Lin-Lu-Yau's Ricci curvature\cite{LLY}, which is the limit version of Ollivier Ricci curvature. Under this normalized flow, which is equivalent to the unnormalized Ricci flow (\ref{equ:OllivierSuggest}) by  scaling the metric in space by a function of $t$, the sum of edge length of the solution metric is $1$ in time. To see this, let $\vec{w}(t)=\{w_{e_0}(t), \ldots, w_{e_m}(t)\}$ be a solution of the unnormalized equation, let $\phi(t)$ be a function of time $t$ and $\phi(t)>0$. Set $\vec{\tilde{w}}(t)=\phi(t)\vec{w(t)}$ and $\sum_{e}\tilde{w_e}(t)=1$, then $\phi(t)=\frac{1}{\sum_{h}w_h(t)}$. Note the edge curvature $\kappa$ does not change under a scaling of the metric. Thus $\tilde{\kappa_e}=\kappa_e$ for all edges $e\in E$. Let $\tilde{t}=t$, then
\begin{align*}
      \frac{d}{d\tilde{t}}\tilde{w_e}(t)& =\frac{d\phi(t)w_e(t)}{dt} \frac{dt}{d\tilde{t}}=\Big(\frac{d\phi(t)}{dt}w_e(t) + \frac{dw_e(t)}{dt}\phi(t)\Big) \times 1\\
      &=-\frac{1}{(\sum_{h}w_h(t))^2} \sum_{h} \frac{dw_h}{dt}  w_e(t) -\kappa_e(t) w_e(t)\phi(t)\\
      &= \tilde{w_e}(t)  \dss_{h \in E(G)}\tilde{\kappa_h}(t)  \tilde{w_h}(t) -\tilde{\kappa_e}(t) \tilde{w_e}(t),
\end{align*}
where the last equation is obtained by replacing $w_e(t)$ by $\frac{1}{\phi(t)}\tilde{w_e}(t)$ for all edges $e$.

On the other side, let $\tilde{w}(t), w(t)$ be  solutions of the normalized equation and unnormalized equation respectively, we show that for each edge $e$, $w_e=\tilde{w_e}\sum_{h}w_h(t)$. It suffices to show that $\tilde{w_e}\sum_{h}w_h(t)$ satisfies equation (\ref{equ:OllivierSuggest}).
\begin{align*}
      \frac{d}{dt}\tilde{w_e}\sum_{h}w_h(t)& =\sum_{h}w_h(t)\frac{d}{dt}\tilde{w_e}(t) + \tilde{w_e}(t) \frac{d\sum_{h}w_h(t)}{dt} \\
      &=\sum_{h}w_h(t) \Big(\tilde{w_e}(t)  \dss_{h \in E(G)}\tilde{\kappa_h}(t)  \tilde{w_h}(t) -\tilde{\kappa_e}(t) \tilde{w_e}(t)\Big) +\tilde{w_e}(t)(-\dss_{h \in E(G)} \kappa_h(t)  w_h(t))\\
      &=-\kappa_e(t) w_e(t).
\end{align*}
Thus, there is a bijection between solutions of the unnormalized and normalized Ricci flow equations.

In Riemannian manifolds, with the establishment of  Ricci flow equations, one of important work is to verify whether this equation always has a  unique smooth solution, at least for a short time,  on any compact manifold of any dimension for any initial value of the metric.
In this paper, we study the problem of the existence and uniqueness of solutions and convergence results to the normalized Ricci flow (\ref{equ:ourRiccilfow}) on connected weighted graphs. The difficulty of the problem lies in that there is no explicit expression for $\kappa_e(t)$, although $\kappa_e$ can be written in terms of the infimum of distance-based 1-Lipschitz function,  there is no common optimal 1-Lipschitz function for all edges $e$, thus it is not easy to estimate the derivative of the right-hand side of Ricci flow.
The main Theorem \ref{thm:sol_continuous} of this paper proves the long-time existence and uniqueness of solutions for the initial value problem involved Ricci flow equations (\ref{equ:ourRiccilfow}) provided that each edge is always the shortest path connecting its endpoints over time.  This theorem also implies a same result for the unnormalized Ricci flow (\ref{equ:OllivierSuggest}). 
We also prove that several convergence results of Ricci flow on path and star graphs. Our results display different possible solutions of Ricci flow for the path of length 2, a graph minor of any finite path resulted by the Ricci flow accompanied with edge operations (see Theorem \ref{thm:pathconverge}) and prove that Ricci flow on star graph can deforms any initial metric to a constant-curvatured metric (see Theorem \ref{thm:starconverge}).

The paper is organized as follows. In section \ref{sec:notaions}, we introduce the notion of Ollivier-Lin-Lu-Yau Ricci curvature defined on weighted graphs and related lemmas; in section \ref{sec:Ricciflow}, we introduce the Ricci flow equation and prove our main theorem;  in section \ref{sec:SolutionstothecontinuousRicciflow}, we display different types of solution for continuous Ricci flows on path graph; in section \ref{sec:ConvergenceofRicciflow}, we prove convergence results of normalized Ricci flow on path and star.

\section{Notations and lemmas}\label{sec:notaions}
Let $G=(V, E, w)$ be a weighted graph on vertex set $V$ associated by the edge weight function $w:E\to [0, \infty)$. 
For any two vertices $x, y$, we write $xy$ or $x\sim y$ to represent an edge $e=(x, y)$, $w_{xy}$ is always positive if $x\sim y$. For any vertex $x\in V$, denote the neighbors of $x$ as $N(x)$ and the degree of $x$ as $d_x$. The length of a path is the sum of  edge lengths on the path, for any two vertices $x, y$,  the distance $d(x, y)$ is the length of a minimal weighted path among all paths that connect $x$ and $y$.
We call $G$ a combinatorial graph if $w_{xy}=1$ for $x\sim y$, $w_{xy}=0$ for $x\not\sim y$.
Next we recall the definition of Ricci curvature defined on weighted graphs.

\begin{definition}\label{probabilitydistribution}
 A probability distribution over the vertex set $V(G)$ is a mapping $\mu: V\to [0,1]$ satisfying $\sum_{x \in V} \mu (x)=1$. Suppose that two probability distributions $\mu_1$ and $\mu_2$ have finite support. A \textit{coupling} between $\mu_1$ and $\mu_2$ is a mapping $A: V\times V\to [0, 1]$ with finite support such that
$$\sum\limits_{y \in V} A(x, y)=\mu_1(x) \ \text{and} \sum\limits_{x \in V} A(x, y)=\mu_2(y). $$
\end{definition}

\begin{definition}
The \textit{transportation distance} between two probability distributions $\mu_1$ and $\mu_2$ is defined as follows:
$$W(\mu_1, \mu_2)=\inf_{A} \sum_{x, y\in V} A(x, y)d(x, y),$$
where the infimum is taken over all coupling $A$ between $\mu_1$ and $\mu_2$.
\end{definition}

By the theory of linear programming, the transportation distance is also equal to the optimal solution of its dual problem. Thus, we also have \[W(\mu_1, \mu_2)=\sup_{f} \sum_{x\in V}f(x)[\mu_1(x)-\mu_2(x)]\] where $f$ is $1$-Lipschitz function satisfying \[|f(x)-f(y)| \leq d(x, y) \ \textrm{for $\forall x, y\in V(G)$ }.\]

\begin{definition} \cite{Ollivier}\cite{LLY}\cite{blhy}\label{def:alpharicci}
Let $G=(V, E, w)$ be a weighted graph where the distance $d$ is determined by the weight function $w$.  For any $x, y\in V$ and $\alpha \in [0, 1]$,  the \textit{$\alpha$-Ricci curvature} $\kappa_{\alpha}$ is defined to be
$$\kappa_{\alpha}(x, y)=1-\frac{W(\mu_x^{\alpha}, \mu_y^{\alpha})}{d(x, y)},  $$
where the probability distribution $\mu_x^{\alpha}$ is defined as:
\[\mu_x^{\alpha}(y)=
\begin{cases}
 \alpha,  & \text{if $y=x$}, \\
 \displaystyle (1-\alpha)\frac{\gamma(w_{xy})}{\sum_{z\sim x} \gamma(w_{xz})},  & \text{if $y\sim x$},\\
 0,& \text{otherwise},
\end{cases}\]
where $\gamma: \mathbb{R}_{+} \to \mathbb{R}_{+}$ represents an arbitrary one-to-one function which guarantees that the curvature is invariant under metric scaling. 
The Lin-Lu-Yau's Ollivier \textit{Ricci curvature} $\kappa(x, y)$ is defined as
\[\kappa(x, y)=\lim\limits_{\alpha\to 1} \frac{\kappa_{\alpha}(x, y)}{1-\alpha}. \]
\end{definition}
It is clear that curvature $\kappa$ is a continuous function on values of weight  $w$. 
On combinatorial graphs,  the probability distribution $\mu_x^{\alpha}$ is uniform on $x$'s neighbors, the above limit expression for Lin-Lu-Yau’s Ollivier curvature is studied in \cite{LLY, BCLMP} and it turned out that function $\kappa_{\alpha}:\alpha \to \mathbb{R}$ is a piece-wise linear function with at most three pieces. Therefore one can calculate $\kappa$ easily by choosing a large enough value of $\alpha$.  On weighted graphs, the probability distribution $\mu_x^{\alpha}$ is determined by weight $w$ and function $\gamma$,  the distance $d$ involved is reflected directly by $w$. Some authors used a combinatorial distance $d$ which measures the number of edges in the shorted path instead of the weighted version. For instance, in  \cite{MW},
the authors \cite{MW} also simplify the limit expression of $\kappa(x, y)$ to two different limit-free expressions via graph Laplacian and via transport cost. Although the details are different, the curvature definitions are essentially the same.
The limit-free expression of $\kappa(x, y)$ in \cite{MW} is still true under our definition.

To state this limit-free curvature expression, we need to rephrase the notion of Laplacian in order to adapt to Definition \ref{def:alpharicci}.
Let $G=(V, w, \mu)$ be a weighted graph, let $f$ represent a function in $\{f: V \to \mathbb{R}\}$. The {\it gradient} of $f$ is defined by
$$\nabla_{xy}f=\frac{f(x)-f(y)}{d(x,y)}\ \ \mbox{for $x\neq y$. }$$
According to Definition \ref{def:alpharicci}, the graph {\it Laplacian} $\Delta$ is defined via:
\begin{align}\label{equ:laplacianformula}
    \Delta f(x)= \frac{1}{\sum_{z\sim x} \gamma(w_{xz})}\sum_{y\sim x} \gamma(w_{xy})(f(y)-f(x)),
\end{align}
where $f\in\{f: V \to \mathbb{R}\}$.

The  limit-free formulation of the Lin-Lu-Yau Ricci curvature using graph Laplacian and gradient is as follows. 

\begin{theorem}\label{thm:curvature_laplacian}\cite{MW} (Curvature via the Laplacian) Let $G= (V,w,m)$ be a weighted graph and let $x \neq y \in V(G)$. Then

\begin{equation*}
    \kappa(x,y) = \inf_{\substack{f \in Lip(1)\\ \nabla_{yx}f = 1}} \nabla_{xy} \Delta f,
\end{equation*}
\end{theorem}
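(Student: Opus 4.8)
The plan is to start from the Kantorovich dual of the transportation distance recorded above and to expand the optimal transport cost between $\mu_x^\alpha$ and $\mu_y^\alpha$ to first order in $1-\alpha$. The bridge between the two sides of the identity is that, for any $f\colon V\to\mathbb{R}$,
\[
\sum_{z} f(z)\,\mu_x^\alpha(z) = \alpha f(x) + (1-\alpha)\sum_{z\sim x}\frac{\gamma(w_{xz})}{\sum_{u\sim x}\gamma(w_{xu})}\, f(z) = f(x) + (1-\alpha)\,\Delta f(x),
\]
which is immediate from Definition \ref{def:alpharicci} and formula (\ref{equ:laplacianformula}), using that the transition weights at $x$ sum to $1$. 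Subtracting the same identity at $y$ and taking the supremum over $f\in Lip(1)$ gives
\[
W(\mu_x^\alpha,\mu_y^\alpha) = \sup_{f\in Lip(1)} \Big\{ \big(f(x)-f(y)\big) + (1-\alpha)\big(\Delta f(x)-\Delta f(y)\big)\Big\}.
\]

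Next I would analyze this supremum as $\alpha\to 1$. Writing $\epsilon=1-\alpha$ and $D=d(x,y)$, the leading term is $\sup_{f\in Lip(1)}(f(x)-f(y))=D$, attained precisely on the face $M:=\{f\in Lip(1): f(x)-f(y)=D\}=\{f\in Lip(1):\nabla_{xy}f=1\}$. The objective depends on $f$ only through its values on the finite set $\{x,y\}\cup N(x)\cup N(y)$, and every $1$-Lipschitz function on this set extends to a $1$-Lipschitz function on $V$ (McShane extension), so modulo additive constants the feasible set is compact and the coefficient $\Delta f(x)-\Delta f(y)$ is bounded on it. Since the objective is affine in $\epsilon$ for each fixed $f$, an envelope (Danskin-type) argument then identifies the right derivative at $\epsilon=0$ with the maximal $\epsilon$-coefficient over maximizers of the $\epsilon=0$ problem:
\[
W(\mu_x^\alpha,\mu_y^\alpha) = D + (1-\alpha)\,\sup_{f\in M}\big(\Delta f(x)-\Delta f(y)\big) + o(1-\alpha).
\]

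Substituting this expansion into $\kappa_\alpha(x,y)=1-W(\mu_x^\alpha,\mu_y^\alpha)/D$ and dividing by $1-\alpha$, the constant terms cancel, and letting $\alpha\to 1$ yields
\[
\kappa(x,y) = -\frac{1}{D}\sup_{f\in M}\big(\Delta f(x)-\Delta f(y)\big) = \inf_{\substack{f\in Lip(1)\\ \nabla_{yx}f=-1}} \nabla_{yx}\Delta f,
\]
after rewriting $\tfrac{1}{D}\big(\Delta f(y)-\Delta f(x)\big)=\nabla_{yx}\Delta f$ and noting that $f(x)-f(y)=D$ is the same as $\nabla_{yx}f=-1$. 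Finally, the change of variables $f\mapsto -f$ preserves $Lip(1)$, sends the constraint $\nabla_{yx}f=-1$ to $\nabla_{yx}f=1$, and sends $\nabla_{yx}\Delta f$ to $\nabla_{xy}\Delta f$; this turns the formula into the stated form.

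The step I expect to be the crux is the envelope argument, namely showing that the first-order coefficient is governed \emph{only} by the face $M$, even though no single $f$ is optimal for all $\alpha$. Concretely I must rule out near-optimizers with $f(x)-f(y)=D-\delta_\alpha$ where $\delta_\alpha\to 0$ but $\delta_\alpha/(1-\alpha)\not\to 0$; this is exactly where the boundedness of $\Delta f(x)-\Delta f(y)$ over $Lip(1)$ forces $\delta_\alpha=O(1-\alpha)$, and where compactness lets me extract a convergent subsequence of maximizers lying in the limit on $M$. Everything else reduces to bookkeeping with the definitions of $\mu_x^\alpha$, $\Delta$, and $\nabla$.
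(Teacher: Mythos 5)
Your proof is correct: the identity $\sum_z f(z)\mu_x^\alpha(z)=f(x)+(1-\alpha)\Delta f(x)$, Kantorovich duality, and the Danskin-type envelope argument (with the reduction to the finite set $\{x,y\}\cup N(x)\cup N(y)$ and McShane extension supplying compactness) together give a complete derivation, and the final sign bookkeeping with $f\mapsto -f$ lands exactly on the stated formula. The paper itself offers no proof of this theorem --- it cites \cite{MW} and merely remarks that the argument there carries over to the weighted distance --- and your route (first-order expansion of $W(\mu_x^\alpha,\mu_y^\alpha)$ in the idleness parameter via duality) is essentially the standard argument of that reference, so there is nothing to flag.
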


where $\nabla_{xy}f$ is the gradient of $f$,  $d$ is the combinatorial graph distance.

\begin{remark}
Although in Theorem \ref{thm:curvature_laplacian}, $\kappa(x,y)$ is defined with the assumption that $d$ is the usual combinatorial graph distance, however, the proof of Theorem \ref{thm:curvature_laplacian} works verbatim when $d$ is the weighted distance. Please refer to the detailed proofs in \cite{MW}. 
\end{remark}

 Using the weighted distance in Definition \ref{def:alpharicci}, the limit expression is  simplified to another limit-free version via a so called $\ast$-coupling functions\cite{blhy}.  
Let $\mu_x:=\mu_x^0$ be the probability distribution of random walk at $x$ with idleness equal to zero. For any two vertices $u$ and $v$, a {\em $\ast$-coupling} between $\mu_u$ and $\mu_v$ is a mapping $B: V\times V\to \mathbb{R}$ with finite support such that 
\begin{enumerate}
    \item $0<B(u,v)$, but all other values $B(x,y)\leq 0$.
    \item $\sum\limits_{x,y\in V} B(x,y)=0$.  
    \item $\sum\limits_{y \in V} B(x, y)=-\mu_u(x)$ for all $x$ except $u$. 
    \item $\sum\limits_{x \in V} B(x, y)=-\mu_v(y)$ 
for all $y$ except $v$.  
\end{enumerate}
Because of items (2),(3), and (4), we get
$$B(u,v)=\sum_{(x,y)\in V\times V \setminus \{(u,v)\}}-B(x,y) \leq \sum_{x} \mu_u(x) + \sum_{y} \mu_v(y)\leq 2.$$

 \begin{theorem}(Curvature via Coupling function)\cite{blhy}\label{thm:curvatureviacoupling}
 Let $G= (V,w,m)$ be a weighted graph and let $u, v\in V(G)$ and $u \neq v $. Then
$$\kappa(u,v)=\frac{1}{d(u,v)}\sup\limits_{B} \sum\limits_{x, y\in V} B(x, y)d(x, y),$$
where the superemum is taken over all weak $\ast$-coupling $B$ between $\mu_u$ and $\mu_v$.
\end{theorem}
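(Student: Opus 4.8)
The plan is to prove the identity directly from the limit definition of $\kappa$, by setting up an explicit affine correspondence between ordinary couplings of the idleness-$\alpha$ measures and $\ast$-couplings of $\mu_u,\mu_v$. Write $c=d(u,v)$ and recall $\mu_x^{\alpha}=\alpha\delta_x+(1-\alpha)\mu_x$, so that $\kappa_\alpha(u,v)=1-W(\mu_u^\alpha,\mu_v^\alpha)/c$ and $\kappa(u,v)=\lim_{\alpha\to1}\kappa_\alpha(u,v)/(1-\alpha)$. Since the measures have finite support, $W(\mu_u^\alpha,\mu_v^\alpha)=\inf_A\sum_{x,y}A(x,y)d(x,y)$ is a finite linear program over couplings $A\ge 0$ with marginals $\mu_u^\alpha,\mu_v^\alpha$. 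I would also record at the outset the consequence of conditions (2)--(4) that the $u$-row sum and $v$-column sum of any $\ast$-coupling $B$ both equal $1$, using that idleness zero forces $\mu_u(u)=\mu_v(v)=0$.

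First I would define the forward map: given a $\ast$-coupling $B$, set $A(x,y)=-(1-\alpha)B(x,y)$ for $(x,y)\ne(u,v)$ and $A(u,v)=1-(1-\alpha)B(u,v)$. Using conditions (3),(4) and the row/column sum facts above, a short computation shows $A$ has exactly the marginals $\mu_u^\alpha,\mu_v^\alpha$. Nonnegativity of the off-$(u,v)$ entries is immediate from condition (1), while $A(u,v)=1-(1-\alpha)B(u,v)\ge 1-2(1-\alpha)\ge 0$ for $\alpha\ge 1/2$, where I invoke the bound $B(u,v)\le 2$ noted above. A direct expansion then yields the crucial cost identity
\[
\sum_{x,y}A(x,y)d(x,y)=c-(1-\alpha)\sum_{x,y}B(x,y)d(x,y).
\]
Since each $B$ produces a \emph{feasible} $A$ of this cost, $W(\mu_u^\alpha,\mu_v^\alpha)\le c-(1-\alpha)\sup_B\sum_{x,y}B(x,y)d(x,y)$, and hence $\kappa_\alpha(u,v)/(1-\alpha)\ge \tfrac1c\sup_B\sum_{x,y}B(x,y)d(x,y)$ for every $\alpha\in[1/2,1)$; letting $\alpha\to1$ gives the inequality ``$\ge$''.

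For the reverse inequality I would invert the construction: take an optimal coupling $A^\ast$ for $W(\mu_u^\alpha,\mu_v^\alpha)$ and set $B(x,y)=-\tfrac{1}{1-\alpha}A^\ast(x,y)$ for $(x,y)\ne(u,v)$, with $B(u,v)$ fixed by requiring $\sum_{x,y}B(x,y)=0$. Then I would verify that $B$ is a legitimate $\ast$-coupling: condition (1) holds because $A^\ast\ge 0$ and because the marginal bound $A^\ast(u,v)\le\mu_v^\alpha(v)=\alpha<1$ forces $B(u,v)=\tfrac{1}{1-\alpha}(1-A^\ast(u,v))>0$; conditions (3),(4) follow by dividing the marginal identities for $A^\ast$ by $-(1-\alpha)$ and using $\mu_u^\alpha(x)=(1-\alpha)\mu_u(x)$ for $x\ne u$; and condition (2) is imposed by construction. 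Reading the same cost identity backwards gives $\sum_{x,y}B(x,y)d(x,y)=(c-W(\mu_u^\alpha,\mu_v^\alpha))/(1-\alpha)=c\,\kappa_\alpha(u,v)/(1-\alpha)$, whence $\sup_B\sum_{x,y}B(x,y)d(x,y)\ge c\,\kappa_\alpha(u,v)/(1-\alpha)$. Letting $\alpha\to1$ yields ``$\le$'', completing the proof.

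The routine parts are the two marginal computations and the algebraic cost identity. The step I expect to be the main obstacle is the feasibility and sign bookkeeping that makes the correspondence a genuine bijection: one must check that the constraint $A\ge 0$ translates precisely into the sign pattern of condition (1), that the asymmetric ``for all $x$ except $u$'' and ``for all $y$ except $v$'' clauses in (3),(4) match the fact that only the $(u,v)$ entry carries order-$1$ mass while all others are of order $1-\alpha$, and that the threshold $\alpha\ge 1/2$ coming from $B(u,v)\le 2$ indeed keeps $A(u,v)\ge 0$. Once the correspondence is pinned down, the limit is essentially free—indeed the two inequalities together show $\kappa_\alpha(u,v)/(1-\alpha)$ is already constant on $[1/2,1)$—so no delicate limiting or interchange argument is required.
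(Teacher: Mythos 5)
Theorem \ref{thm:curvatureviacoupling} is quoted from \cite{blhy} and the paper gives no proof of it, so there is no internal argument to compare against; judged on its own, your proof is correct and is essentially the standard derivation of this limit-free formula. The affine correspondence $A=\delta_{(u,v)}-(1-\alpha)B$ does exactly what you claim: the row-sum facts $\sum_y B(u,y)=\sum_x B(x,v)=1$ (which follow from conditions (2)--(4) together with $\mu_u(u)=\mu_v(v)=0$) give the marginals $\mu_u^{\alpha},\mu_v^{\alpha}$; condition (1) matches $A\geq 0$ off the $(u,v)$ entry, the bound $B(u,v)\leq 2$ handles $A(u,v)\geq 0$ for $\alpha\geq 1/2$, and in the reverse direction $A^{\ast}(u,v)\leq \mu_v^{\alpha}(v)=\alpha<1$ forces $B(u,v)>0$; the cost identity $\sum A\,d = d(u,v)-(1-\alpha)\sum B\,d$ then yields both inequalities. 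A nice byproduct, which you note, is that $\kappa_{\alpha}(u,v)/(1-\alpha)$ is constant on $[1/2,1)$, so the limit defining $\kappa(u,v)$ is attained exactly and no limiting subtleties arise; this is consistent with the piecewise-linearity of the idleness function discussed after Definition \ref{def:alpharicci}. The one point worth making explicit in a final write-up is that your argument uses $\mu_u=\mu_u^{0}$ (zero idleness), which is precisely how $\mu_u$ is defined just before the theorem, so the hypothesis is satisfied.
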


Since $0<B(u,v)$ and $B(x,y)\leq 0$.
then $$\kappa(u,v)\leq \frac{1}{d(u,v)}B(u,v)d(u, v)\leq 2.$$ 

A lower bound of $\kappa(u,v)$ is obtained by using a result of Lemma 3.2 in \cite{blhy}.  
We re-organized this result as follows: 
\begin{lemma}\label{coro:curvaturebound}
Let $G=(V, E)$ be a weighted graph associated by a edge weight  function $w$ where the maximum value of $w$ is denoted by $D(G)$.  
Let $u \neq v \in V$. Then
$$\kappa(u,v)\geq -\frac{2D(G)}{d(u, v)}.$$
\end{lemma}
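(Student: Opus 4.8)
The plan is to bound the curvature directly from the Laplacian formulation in Theorem~\ref{thm:curvature_laplacian}, which by the Remark following it remains valid for the weighted distance $d$. For the pair $(u,v)$ this formulation reads
\[
\kappa(u,v)=\inf_{\substack{f\in \mathrm{Lip}(1)\\ \nabla_{vu}f=1}}\nabla_{uv}\Delta f
=\inf_{f}\frac{\Delta f(u)-\Delta f(v)}{d(u,v)},
\]
so it suffices to produce a uniform lower bound on $\Delta f(u)-\Delta f(v)$ valid for every admissible $f$. Since $d(u,v)>0$ is fixed, the infimum then transfers that bound to $\kappa(u,v)$.

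The key step is the pointwise estimate $|\Delta f(x)|\le D(G)$ at every vertex $x$, for every $1$-Lipschitz $f$. To see this, recall from \eqref{equ:laplacianformula} that $\Delta f(x)=\sum_{y\sim x}\frac{\gamma(w_{xy})}{\sum_{z\sim x}\gamma(w_{xz})}\,(f(y)-f(x))$ is a convex combination of the increments $f(y)-f(x)$ over the neighbors $y$ of $x$, since the coefficients are nonnegative and sum to $1$. For each such neighbor, the $1$-Lipschitz property together with $d(x,y)\le w_{xy}\le D(G)$ gives $|f(y)-f(x)|\le d(x,y)\le D(G)$. Averaging yields $|\Delta f(x)|\le D(G)$, as claimed. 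Here the only point needing a word of justification is the harmless inequality $d(x,y)\le w_{xy}$: the weighted distance between adjacent vertices never exceeds the length of the connecting edge, which is exactly what lets me pass from edge weights to the distances appearing in the Laplacian.

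Combining these, for any admissible $f$ we have $\Delta f(u)\ge -D(G)$ and $\Delta f(v)\le D(G)$, hence $\Delta f(u)-\Delta f(v)\ge -2D(G)$, and therefore $\nabla_{uv}\Delta f\ge -2D(G)/d(u,v)$. Taking the infimum over $f$ gives the claimed bound $\kappa(u,v)\ge -2D(G)/d(u,v)$. I expect no real obstacle: once the averaging estimate is in place the conclusion is a one-line computation, and the whole argument avoids constructing any explicit transport plan.

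For readers who prefer the route indicated by the cited Lemma~3.2 of \cite{blhy}, the same bound can instead be read off from the coupling formulation of Theorem~\ref{thm:curvatureviacoupling}: one exhibits a single admissible $\ast$-coupling $B$ between $\mu_u$ and $\mu_v$ and estimates $\sum_{x,y}B(x,y)\,d(x,y)$ from below, using that $B(u,v)\le 2$ and that every distance $d(x,y)$ arising is at most $D(G)$ away from the contribution forced by the marginal constraints. The Laplacian argument above is cleaner precisely because it sidesteps this explicit coupling construction, so that is the approach I would write out in full.
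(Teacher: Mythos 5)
Your proof is correct, but it takes a genuinely different route from the paper's. The paper works with the coupling formulation (Theorem~\ref{thm:curvatureviacoupling}): it constructs an explicit $\ast$-coupling $B$ between $\mu_u$ and $\mu_v$ (mass $2$ on $(u,v)$, negative mass distributed over $N(u)\times\{v\}$, $\{u\}\times N(v)$, and the diagonal entries $(u,u)$, $(v,v)$), verifies the marginal conditions, and then lower-bounds $\sum_{x,y}B(x,y)d(x,y)$ using the triangle inequality; since the coupling formulation is a supremum, exhibiting one good coupling suffices. You instead use the Laplacian formulation (Theorem~\ref{thm:curvature_laplacian}), which is an infimum, so you correctly prove a bound uniform over all admissible $f$: the observation that $\Delta f(x)$ is a convex combination of increments $f(y)-f(x)$ with $|f(y)-f(x)|\le d(x,y)\le w_{xy}\le D(G)$ immediately gives $|\Delta f(x)|\le D(G)$ at every vertex, hence $\nabla_{uv}\Delta f\ge -2D(G)/d(u,v)$ for every competitor. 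Your argument is shorter and avoids both the coupling construction and the verification of its marginal constraints; the paper's approach has the side benefit of producing a concrete near-optimal transport plan and reuses the same machinery that yields the upper bound $\kappa\le 2$. One small point to make explicit if you write this up: the admissibility constraint $\nabla_{vu}f=1$ plays no role in your estimate (you bound over all of $\mathrm{Lip}(1)$), which is fine since restricting the class of $f$ only raises the infimum.
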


\begin{proof}
For any vertex $u\in V$, let $D_u=\sum_{x\in N(u)} \gamma(w_{ux})$. 
Fix an edge $uv\in E(G)$, 
we define a function $B: V\times V\to \mathbb{R}$ for calculating $\kappa(u, v)$. 
For any $x\in N(u)\setminus \{v\}$, let 
$B(x,y)=-\frac{\gamma(w_{ux})}{D_u}$ if $y=v$ and 
 $0$ otherwise.
For any $y\in N(v)\setminus \{u\}$, let
$B(x,y)=-\frac{\gamma(w_{vy})}{D_v}$ if $x=u$ and 
$0$ otherwise.
Let $B(v,v)=-\frac{\gamma(w_{uv})}{D_u}$,
$B(u,u)=-\frac{\gamma(w_{uv})}{D_v}$,
and $B(u,v)=2$.
The rest of entries are set to $0$.
It is straightforward to verify the following results: 
\begin{center}
 $\sum\limits_{x,y\in V} B(x,y)=0$;  $\sum\limits_{y \in V} B(x, y)=-\mu_u(x)$ for all $x$ except $u$;  $\sum\limits_{x \in V} B(x, y)=-\mu_v(y)$ for all $y$ except $v$.
\end{center} 
Thus $B$ is $\ast$-coupling between $\mu_u$ and $\mu_v$. 
By Theorem \ref{thm:curvatureviacoupling}, we have 
\begin{align}
\begin{split}
\kappa(u,v) &\geq \frac{1}{d(u,v)} \sum_{x,y\in V}B(x,y)d(x,y)\\
&= 2
- \sum_{x\in N(u)\setminus \{v\}} \frac{\gamma(w_{ux})}{D_u} 
\frac{d(x,v)}{d(u,v)}
-\sum_{y\in N(v)\setminus \{u\}} \frac{\gamma(w_{vy})}{D_v} 
\frac{d(u, y)}{d(u,v)}\\ 
&\geq 2
- \sum_{x\in N(u)\setminus \{v\}} \frac{\gamma(w_{ux})}{D_u} 
\frac{d(x,u)+d(u,v)}{d(u,v)}
-\sum_{y\in N(v)\setminus \{u\}} \frac{\gamma(w_{vy})}{D_v} 
\frac{d(y,v)+d(u,v)}{d(u,v)}\\ 
&= 2
- \sum_{x\in N(u)\setminus \{v\}} \frac{\gamma(w_{ux})}{D_u} 
-\sum_{y\in N(v)\setminus \{u\}} \frac{\gamma(w_{vy})}{D_v} 
-
\sum_{x\in N(u)\setminus \{v\}} \frac{\gamma(w_{ux})}{D_u} 
\frac{d(x,u)}{d(u,v)}\\
&\quad\quad
-\sum_{y\in N(v)\setminus \{u\}} \frac{\gamma(w_{vy})}{D_v} 
\frac{d(y,v)}{d(u,v)}\\ 
&= \frac{w_{uv}}{D_u} + \frac{\gamma(w_{uv})}{D_v} 
- \sum_{x\in N(u)\setminus \{v\}} \frac{\gamma(w_{ux})}{D_u} 
\frac{d(x,u)}{d(u,v)}
-\sum_{y\in N(v)\setminus \{u\}} \frac{\gamma(w_{vy})}{D_v} 
\frac{d(y,v)}{d(u,v)}\\
&= \frac{2\gamma(w_{uv})}{D_u} + \frac{2\gamma(w_{uv})}{D_v} 
- \sum_{x\in N(u)} \frac{\gamma(w_{ux})}{D_u} 
\frac{d(x,u)}{d(u,v)}
-\sum_{y\in N(v)} \frac{\gamma(w_{vy})}{D_v} 
\frac{d(y,v)}{d(u,v)}.
\end{split}
\end{align}

Let $D(G)$ denote the maximum edge length of $G$, i.e. $d(x, y)\leq D(G)$ for all $x\sim y$. Then 
\begin{align}\label{inequal:lowerofkuv}
\begin{split}
\kappa(u,v)
&\geq - \frac{D(G)}{d(u,v)} \Big( \sum_{x\sim u} \frac{\gamma(w_{ux})}{D_u}  + \sum_{y\sim v} \frac{\gamma(w_{vy})}{D_v} \Big)= - \frac{D(G) }{d(u,v)} \times 2.
\end{split}
\end{align}
\end{proof}

\section{Continuous Ricci flow process} \label{sec:Ricciflow}
In this section, we will describe a continuous Ricci flow process on weighted graphs and prove that this Ricci flow has a unique solution that exists for all time. 

Let $\kappa: E(G) \to \mathbb{R}^{|E(G)|}$ be the Ollivier-Lin-Lu-Yau curvature function defined on a weighted graph $G = (V,w,\mu)$ where $w$ is the weight function on the edge set of $G$ and $\mu = \{\mu_x: x\in V(G)\}$ be probability distribution function such that for each $x \in V(G)$,
\begin{equation}\label{eq:weight_dist}
\mu^{\alpha}_x(y) = \begin{cases}
    \alpha & \textrm{ if $x=y$},\\
 (1-\alpha) \frac{\gamma(w_{xy})}{\sum_{z\in N(x)} \gamma(w_{xz})} & \textrm{ if $y\in N(x)$},\\
0 & \textrm{ otherwise},
\end{cases}
\end{equation}
where $\gamma:\mathbb{R}_{+} \to \mathbb{R}_{+}$ is a Lipschitz function over $[\delta, 1]$ for all $\delta > 0$.

Let $X(t) = \langle w_{e_1}(t), w_{e_2}(t),\ldots, w_{e_m}(t)\rangle \in \mathbb{R}_+^m$  where $t\in [0,\infty)$ and $m = |E(G)|$ denotes the number of edges of $G$. 
Since  the initial weight on each edge is not zero, we  take    $X_0 \in \mathbb{R}_{++}^m$, an arbitrary vector $\langle w_{e_1}(0), w_{e_2}(0), \ldots, w_{e_m}(0)\rangle$ with $\sum_{i=1}^{m} w_{e_i}(0) = 1$,  as the initial metric $\vec{w}(0)$ for graph $G$. Then, we define a system of ordinary differential equations as follows:
\begin{equation}\label{eq:stochastic_continuous}
\begin{cases}
    \frac{dw_e}{dt} = -\kappa_e w_e + w_e \sum_{h \in E(G)} \kappa_h w_h\\
     X(0) = X_0.
\end{cases}
\end{equation}

Now we introduce the continuous Ricci flow process as follows: 

\begin{algorithm}[H]\label{alg:continuousricciflow}
\KwIn{An undirected graph $G$, merge threshold $mt>0$, and termination threshold $\delta > 0$.}
\KwOut{A collection of vertex-disjoint minors of $G$, which are the `clusters' if $G$ is viewed as a network.}

Set hierarchy level to be 1.

Let $\vec{w}$ be the solution to the system of ODE described in \eqref{eq:stochastic_continuous} with the exit condition:\linebreak
 (I)  $w_{uv}(t) > d(u, v)(t)$ for some $uv\in E(G)$ and some $t \in [0,\infty)$; \linebreak
 (II) $w_{uv}(t) < mt$ for some $uv\in E(G)$ and some $t \in [0,\infty)$;\linebreak
 If Condition (I) is met, delete the edge $uv$ and restart step $2$;\linebreak
If Condition (II) is met, contract the edge $uv$ and restart step $2$.\linebreak
 If $\vec{w}$ is a non-chaotic solution, go to Step $3$. \linebreak
 If $\vec{w}$ is chaotic, slightly perturb the initial conditions.
 
 Let $G'$ be the resulting graph from Step 2. \linebreak
(I) Label each edge of $G'$ with the current hierarchy level.\linebreak
(II) Increase the hierarchy level by 1. \linebreak
 Perform Step $2$ on $G'$.
\caption{Continuous Ricci flow process}
\end{algorithm}

We make three observations: 

First, there is no edge getting a zero weight at any time $t$ during the whole Ricci flow process. 
In the initial weighted graph $G$, $w_e(0)\neq 0$ for all edges $e\in E(G)$. Fix an edge $e$, the right-hand side of (\ref{eq:stochastic_continuous}) is bounded below by $w_e(-2-2|E(G)|)$ according to Lemma \ref{coro:curvaturebound}, then $w_e(t)>w_e(0)e^{(-2-2|E(G)|)t}$ which is always positive at finite time.

Second, each edge has a weight assigned to it over time, theoretically, we don't know if there is an edge meeting the exit condition (I), that is,  $w_{uv}(t) > d(u, v)(t)$ for some $uv\in E(G)$.  To fix this possible barrier, we choose to delete such edges, notice that the resulting graph is still connected. In addition, the only reason for the reduction in the number of vertices is the exit condition (II). Thus, $G$ will not degenerate to a point. 

Last, since $\sum_{i=1}^{m} w_{e_i}(0) = 1$,  under the assumption that no edges meeting two exit conditions, we claim that
the property $\sum_{i=1}^{m} w_{e_i} = 1$ is always maintained. 
To see this, 
let $T(t) = \sum_{h\in E(G^t)} w_h(t)$, where $G^t$ is the resulting graph at time $t$. Sum up both sides of equation (\ref{eq:stochastic_continuous}) over all edges of $G^t$, we have
$$
\frac{d T(t)}{dt} =(T(t)-1)\sum_{h \in E(G^t)} \kappa_h(t) w_h(t).
$$
By Theorem \ref{thm:curvature_laplacian}, the right hand side is a bounded value for all $t$, 
it follows then that $T(t)-1$ has the following form:
$$ T(t)-1 =ce^{\ds\int_{0}^t (\sum_{h\in E(G^s)} \kappa_h(s) w_h(s)) ds},$$ where $c$ is a constant depending on $T(0)$.
Since $T(0) = 1$, then $c=0$ implies $T(t)=1$ for all $t\geq 0$, done. 
Therefore, in algorithm \ref{alg:continuousricciflow}, for all time $t$ the sum of weight is at most $1$, and for each edge $e$, $0<w_e(t)\leq 1$. In order to remain the sum of weight constant $1$, an alternative approach is re-normalize the edge weight after each Ricci flow iteration so that the sum of weight always remains $1$, but sum of weight being $1$ or not does not affect the validity of the following theorem.

\subsection{Existence and uniqueness of the solution}

\begin{theorem}\label{thm:sol_continuous}
For any initial weighted graph $G$,  by fixing the exit condition (I),  
there exists a unique solution $X(t)$, for all time $t\in [0, \infty)$,  to the system of ordinary  differential equations in \eqref{eq:stochastic_continuous}. 
\end{theorem}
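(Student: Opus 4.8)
The plan is to read \eqref{eq:stochastic_continuous} as an autonomous initial value problem $\dot X = F(X)$ on the open region $\Omega = \{X \in \mathbb{R}_{++}^m : \text{each edge is the shortest path between its endpoints}\}$, with $F_e(X) = -\kappa_e(X)\,w_e + w_e\sum_{h} \kappa_h(X)\,w_h$, and to invoke the Picard--Lindel\"of theorem. Two ingredients must be supplied: (i) $F$ is locally Lipschitz on $\Omega$, which yields a unique maximal solution on some interval $[0,T^*)$; and (ii) a priori bounds confining the maximal solution to a compact subset of $\Omega$ on every finite time interval, which upgrades local to global existence on $[0,\infty)$. Fixing exit condition (I) is exactly what keeps the flow inside $\Omega$, so throughout one has $d(u,v) = w_{uv}$ for every edge $uv$.

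First I would reduce the regularity of $F$ to that of the curvature map $X \mapsto \kappa_e(X)$. The two normalization observations preceding the theorem already give $0 < w_e(t) \le 1$ and $\sum_e w_e(t)=1$, so it suffices to work on a box $K_\delta = \{X \in \Omega : \delta \le w_e \le 1 \text{ for all } e\}$ for arbitrary $\delta>0$. On $K_\delta$ the function $\gamma$ is Lipschitz, every denominator $\sum_{z\sim x}\gamma(w_{xz})$ is bounded below, so each measure value $\mu_u(x)$ is a Lipschitz function of $X$; and every shortest-path distance $d(x,y)$, being a minimum over paths of sums of weights, is piecewise-linear and hence Lipschitz in $X$. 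Thus it remains to prove the Lipschitz continuity of $\kappa_e$.

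This is the heart of the argument and the main obstacle, precisely because $\kappa_e$ has no closed form. I would use the limit-free coupling characterization (Theorem \ref{thm:curvatureviacoupling}), writing $d(u,v)\,\kappa(u,v) = \max_{B \in \overline{P(X)}} \sum_{x,y} B(x,y)\,d(x,y)$, where $\overline{P(X)}$ is the (closed, uniformly bounded, since $|B(x,y)|\le 2$) polytope of $\ast$-couplings between $\mu_u$ and $\mu_v$; its nonemptiness on all of $K_\delta$ follows from the explicit coupling built in the proof of Lemma \ref{coro:curvaturebound}. The constraint matrix of $P(X)$ is fixed, and only the right-hand sides $\mu_u(x),\mu_v(y)$ and the objective coefficients $d(x,y)$ vary with $X$, all Lipschitz-ly. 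Hoffman's lemma then bounds the Hausdorff motion of the feasible polytope, $d_H(\overline{P(X)},\overline{P(X')}) \le \theta\,\|\mu(X)-\mu(X')\|$, and combining this with the elementary linear-programming estimate $|\mathrm{val}(X)-\mathrm{val}(X')| \le \|c(X)\|\,d_H(\overline{P(X)},\overline{P(X')}) + R\,\|c(X)-c(X')\|$ (with $c$ the objective vector and $R = \sup\{\|B\| : B \in \overline{P}\}$) shows $d(u,v)\,\kappa(u,v)$ is Lipschitz on $K_\delta$. Dividing by $d(u,v)=w_{uv}\ge\delta$ preserves Lipschitzness, so $\kappa_e$, and therefore $F$, is Lipschitz on $K_\delta$; since $\delta>0$ is arbitrary, $F$ is locally Lipschitz on $\Omega$ and Picard--Lindel\"of gives the unique maximal solution.

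Finally, to obtain global existence I would exploit the a priori bounds recorded in the three observations. The lower curvature bound of Lemma \ref{coro:curvaturebound} makes the right-hand side of \eqref{eq:stochastic_continuous} dominate $(-2-2|E(G)|)\,w_e$, so $w_e(t) \ge w_e(0)\,e^{-(2+2|E(G)|)t} > 0$, while conservation of $\sum_e w_e(t)=1$ gives $w_e(t)\le 1$. Hence on any finite interval $[0,T]$ the solution stays in the compact box $K_{\delta_T}$ with $\delta_T = \min_e w_e(0)\,e^{-(2+2|E(G)|)T}$, a compact subset of $\Omega$ (using that exit condition (I) is fixed, so no edge leaves the shortest-path regime). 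The standard escape-from-compacts criterion then excludes $T^* < \infty$: a maximal solution confined to a compact subset of the domain on $[0,T^*)$ would extend past $T^*$, contradicting maximality. Therefore $T^* = \infty$, and the unique solution exists for all $t \in [0,\infty)$.
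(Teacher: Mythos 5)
Your proposal is correct and arrives at the same conclusion, but the central step --- Lipschitz continuity of $\vec w \mapsto \kappa_e(\vec w)\,w_e$ on the truncated region $S_\delta$ --- is handled by a genuinely different mechanism than the paper's. The paper works with the dual (Laplacian) formula of Theorem \ref{thm:curvature_laplacian} and proves two hands-on perturbation lemmas (Lemmas \ref{lem:near-lip} and \ref{lem:near-lip2}) showing that an optimal $1$-Lipschitz test function for the metric $\vec w$ can be surgically modified into an admissible test function for $\vec w'$ while moving each value by at most $|w'_{xy}-w_{xy}|$; substituting the modified function into the Laplacian and using the Lipschitz constant $C$ and positive lower bound $M$ of $\gamma$ yields the explicit constant $4+\frac{2C}{M}$. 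You instead work with the primal ($\ast$-coupling) formula of Theorem \ref{thm:curvatureviacoupling} and treat $d(u,v)\kappa(u,v)$ as the value of a parametric linear program with fixed constraint matrix, invoking Hoffman's error bound for the Hausdorff motion of the feasible polytope together with the standard value-perturbation estimate; you correctly supply the two hypotheses this needs, namely nonemptiness (via the explicit coupling constructed in the proof of Lemma \ref{coro:curvaturebound}) and uniform boundedness of the couplings. Your route avoids the delicate function-modification lemmas entirely and is arguably more robust, at the price of importing an external result and producing a less explicit Lipschitz constant (it contains the Hoffman constant of the coupling constraint system). The globalization step is essentially the paper's: both use Lemma \ref{coro:curvaturebound} to bound the right-hand side between $(-2-2|E(G)|)w_e$ and $2w_e+2$; the paper concludes via explicit right-top and right-bottom comparison solutions, while you use the equivalent escape-from-compact-sets criterion. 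Both arguments share the same mild imprecision of treating the deletion rule of exit condition (I) as simply licensing the identification $d(u,v)=w_{uv}$ throughout, so this is not a defect of your write-up relative to the paper.
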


Before we prove Theorem \ref{thm:sol_continuous}, we first need some lemmas. By the exit condition (I) stated in above algorithm, once $w_{uv}(t) > d(u, v)(t)$ for some $uv\in E(G)$ we will delete the  edge $uv$, thus $w_{xy}$ always represent the length of edge $uv$. For convenience,  we use $w_{xy}$ instead of $d(x, y)$ to represent the distance between any pair of vertices $x$ and $y$.

\begin{lemma}\label{lem:lip-extension}
Let \( G = (V, E) \) be a connected graph with two metrics:
\( \vec{w} >0\) and \( \vec{w}' >0\) such that \( \| \vec{w}' - \vec{w} \|_\infty < \epsilon \).
Let \( f \colon V \to \mathbb{R} \) be 1-Lipschitz under \( \vec{w} \). 
Then there exists a 1-Lipschitz function under $\vec{w'}$ such that $|f'(z)-f(z)|\leq \epsilon$ for every $z\in V$. 
\end{lemma}
\begin{proof}

Define \( f' \colon V \to \mathbb{R} \) as:
\[
f'(y) = \inf_{u \in V} \left\{ f(u) + w'_{uy} \right\} \quad \forall y\in V,
\]

For any vertex \( u \), we have $
w'_{uy} \leq w'_{ux} + w'_{xy}.$
Then
\[
f(u) + w'_{uy} \leq f(u) + w'_{ux}+ w'_{xy}.
\]
The infimum preserves the inequality, that is
\begin{align*}
f'(y) &= \inf_{u \in V} \left\{ f(u) + w'_{uy} \right\} \\
&\leq \inf_{u \in V} \left\{ f(u) + w'_{ux} + w'_{xy} \right\}  \\
&= \left( \inf_{u \in V} \left\{ f(u) + w'_{ux} \right\} \right) + w'_{xy} \\
&= f'(x) + w'_{xy}.
\end{align*}
Thus, $f'$ is a 1-Lipschitz function under the metric $\vec{w'}$.

Since \( \| \vec{w}' - \vec{w} \|_\infty < \epsilon \), for any pair $xy$ 
\[
w_{xy} - \epsilon\leq w'_{xy} \leq w_{xy} + \epsilon.
\]
Take \( u=y \) in the infimum:
$$
f'(y) = \inf_{u \in V} \{ f(u) + w'_{uy} \} 
\leq f(y) + w'_{yy} \leq  f(y).
$$
Let $u$ be the minimizer of $f'(y)$, that is $f'(y) = f(u) + w'_{uy}$. Since \( f \) is 1-Lipschitz under \( \vec{w} \), then $
f(u) \geq f(y) - w_{uy}.
$
Thus
\[
f(u) + w'_{uy} \geq f(y) - w_{uy} + w'_{uy}\geq f(y) - \epsilon,
\]
which gives

\[
f'(y) = f(u) + w'_{uy}  \geq f(y) -  \epsilon.
\]
\end{proof}

The following lemmas guarantee the existence of a 1-Lipschitz function that is tight for a specified pair of vertices.

\begin{lemma}\label{lem:near-lip}
Let $G=(V, E, w)$ be a weighted graph  and $x,y$ be two fixed vertices in $G$. For any $1$-Lipschitz function $f$ defined on $G$ and such that $0 <f(y)-f(x)< w_{xy}$, there exists a $1$-Lipschitz function $f'$, such that $f'(y)-f'(x) = w_{xy}$ and $f'(z) -f(z) \leq  w_{xy}-(f(y)-f(x))$ for all $z \in V$. 
\end{lemma}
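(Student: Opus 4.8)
The plan is to raise the values of $f$ near $y$ by just enough to force the gradient along $xy$ to reach $w_{xy}$, while ensuring that the value at $x$ is left unchanged and that no value anywhere is increased by more than the allotted amount. First I would record the \emph{slack} $\delta := w_{xy} - (f(y)-f(x))$, which is strictly positive by hypothesis and is precisely the budget we may spend.

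The key construction is an auxiliary cone function centered at $y$. I would define
\[
g(z) := f(x) + w_{xy} - d(y,z),
\]
where $d$ is the weighted graph distance. Because each edge is a shortest path (exit condition (I)), we have $d(x,y)=w_{xy}$, so $g(x)=f(x)$ and $g(y)=f(x)+w_{xy}$, whence $g(y)-g(x)=w_{xy}$. Any cone $z \mapsto c - d(y,z)$ is $1$-Lipschitz by the triangle inequality, so $g \in \mathrm{Lip}(1)$. I would then set
\[
f'(z) := \max\{f(z),\, g(z)\},
\]
which is again $1$-Lipschitz since the pointwise maximum of two $1$-Lipschitz functions is $1$-Lipschitz.

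It then remains to check the two required properties. For the gradient: at $x$ we have $g(x)=f(x)$, so $f'(x)=f(x)$, while at $y$ the hypothesis $f(y)=f(x)+(f(y)-f(x)) < f(x)+w_{xy}=g(y)$ shows the maximum is attained by $g$, giving $f'(y)=f(x)+w_{xy}$; hence $f'(y)-f'(x)=w_{xy}$. For the pointwise bound, note $f'(z)-f(z)=\max\{0,\, g(z)-f(z)\}$, and a short rearrangement gives
\[
g(z)-f(z) = \delta + \big(f(y)-f(z)\big) - d(y,z) \leq \delta,
\]
where the inequality is exactly the $1$-Lipschitz property $f(y)-f(z)\leq d(y,z)$. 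Thus $f'(z)-f(z)\leq \delta = w_{xy}-(f(y)-f(x))$ for every $z\in V$.

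The argument is elementary, so I do not expect a serious obstacle; the only point needing care is the precise additive constant in $g$, tuned so that $g(x)=f(x)$ holds \emph{exactly}. This is what keeps $f'(x)=f(x)$ after taking the maximum (so that the gradient normalization is not spoiled by inflating the value at $x$), and it depends crucially on the identity $d(x,y)=w_{xy}$ guaranteed by exit condition (I).
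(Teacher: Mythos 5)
Your proof is correct, and it takes a genuinely different and substantially cleaner route than the paper's. The paper modifies $f$ only at $y$ (setting $g(y)=f(x)+w_{xy}$) and then repairs the resulting Lipschitz violations by a greedy, vertex-by-vertex increase of the values $g(v_j)$ at the offending vertices, with delicate bookkeeping of the increments $a_j$, their ordering, and their interaction with vertices outside the offending set; this is the longest and most fragile part of the paper's argument. You instead take the pointwise maximum of $f$ with the cone $g(z)=f(x)+w_{xy}-d(y,z)$, a McShane--Whitney-type construction: the cone is $1$-Lipschitz by the triangle inequality, the maximum of two $1$-Lipschitz functions is $1$-Lipschitz, and the bound $g(z)-f(z)=\delta+(f(y)-f(z))-d(y,z)\leq\delta$ falls out of the Lipschitz property of $f$ in one line. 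Your construction in fact yields the two-sided estimate $0\leq f'(z)-f(z)\leq\delta$, and the symmetric construction $f=\min\{f',\,c+d(y,\cdot)\}$ would dispatch Lemma~\ref{lem:near-lip2} just as quickly. The one hypothesis you lean on, $d(x,y)=w_{xy}$, is exactly the convention the paper adopts immediately before the lemma (enforced by exit condition (I)), and it is genuinely necessary: if $d(x,y)<w_{xy}$ no $1$-Lipschitz function with gradient $w_{xy}$ across $xy$ exists at all, so you are right to flag it. The paper's proof implicitly uses the same fact (e.g.\ in the step $g(z)-f(x)-w_{xy}\leq w_{xz}-w_{xy}\leq w_{yz}$), so your argument is not assuming anything extra.
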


\begin{proof}
Define function $g$ on $G$ such that
\begin{equation*}
    \begin{cases}
  g(y) = f(x) + w_{xy}, \\
 g(z) = f(z) & z\in V\setminus\{y\}. 
\end{cases}
\end{equation*}
We have 
$$g(y)-g(x) = w_{xy},$$
$$ g(y) - f(y) = w_{xy}-(f(y)-f(x)), $$
$$g(z)-f(z)=0\leq w_{xy}-(f(y)-f(x)) \ \forall z\neq y.$$
Thus,  if $g$ is $1$-Lipschitz on $G$, let $f'=g$,  we are done. 

If $g$ is not $1$-Lipschitz,  according to its definition, it then fails at vertex $y$ and some vertex in $V\setminus \{x, y\}$, denote such vertices as $v_{1}, v_{2},\ldots, v_{t}$ in the order that $g(v_{1}) \leq g(v_{2}) \leq \cdots \leq g(v_{t})< g(y)$.
Denote set $N=\{y, v_{1}, v_{2},\ldots, v_{t} \}$. We have  $|g(v_j)-g(y)| >w_{yv_j}$ for each $j\leq t$. That is,  either $g(v_j)- g(y)>w_{yv_j}$ or $g(v_j)- g(y)< -w_{yv_j}$. Note $g(v_j)- g(y)=g(v_j)-f(x)-w_{xy} \leq w_{xv_j}-w_{xy}\leq w_{yv_j}$,  thus, it has to be the latter case, 
 that is,  $g(v_j)- g(y)< -w_{yv_j}$ for all $j\leq t$.

Note at any pair of vertices out of $N$, $g$ is 1-Lipschitz. Further,  at vertex $x$ and vertex $v_j$, we have $g(v_j)-g(x)=g(v_j)-g(y)+w_{xy}<-w_{yv_j}+w_{xy}\leq w_{xv_j}$, thus,  $g(v_j)-g(x)$ is strictly less than $w_{xv_j}$.

Now we  create a new function $g'$ from $g$ so that $g'$ is  1-Lipschitz on $G$. Let
\begin{equation*}
    \begin{cases}
  g'(v_{j})=g(v_{j})+a_j & j\leq t, \\
  g'(u)=g(u) & \text{otherwise}, 
\end{cases}
\end{equation*}
where values $a_j$ will be chosen from internal 
$$L_j=\Big[g(y)-g(v_{j})-w_{yv_j}, \min\{w_{xy}-(f(y)-f(x)), \  \min_{z\not\in N}\{w_{zv_j} +g(z)- g(v_j)\}\}\Big].$$ One can check that internal $L_j$ is non empty and $a_j$ is positive.  
 Further, let $a_j$ satisfy 
 $0<a_j-a_i<g(v_i)-g(v_j)+w_{v_{i}v_{j}}$ for all $1\leq j<i\leq t$. Note we are able to achieve this purpose by choosing value of $a_j$ as large as possible in the reverse order (i.e., from $a_t$ to $a_1$). Next, we will confirm that $g'$ is  1-Lipschitz on $G$. 
\begin{itemize}
    \item For $y$ and each $v_j$, $g'(v_{j})-g'(y)=g(v_{j})+a_j-g(x)-w_{xy}<g(v_{j})+w_{xy}-f(y)+f(x)-g(x)-w_{xy}<g(v_{j})-f(y)<w_{yv_j}$, thus, $g'$ is 1-Lipschitz at $y$ and $v_j$.
    \item For $v_{i}$ and $v_{j}$, $j<i$, 
    $g'(v_{j})-g'(v_i)=g(v_{j})-g(v_i)+a_{j}-a_i<g(v_{j})-g(v_i)+ g(v_i)-g(v_j)+w_{v_iv_{j}}<w_{v_{i}v_{j}}$
and $g'(v_{j})-g'(v_i)=g(v_{j})-g(v_i)+a_{j}-a_i>g(v_{j})-g(v_i)>-w_{v_jv_i}$.  Thus, $g'$ is 1-Lipschitz at $v_i$ and $v_j$.
\item  For  $z\not\in N$ and each $v_j$'s.   $g'(v_j)-g'(z)=g(v_j)+a_j-g(z)\leq g(v_j)-g(z)+ w_{zv_j} +g(z)-g(v_j)\leq w_{zv_j}$ and $g'(v_j)-g'(z)=g(v_j)+a_j-g(z)>g(v_j)-g(z)>-w_{zvj}$.
Thus, function $g'$ is  1-Lipschitz at  $z$ and $v_j$'s. 

\item For $u, v\notin N$, $|g'(u)-g'(v)|=|g(u)-g(v)|\leq w_{uv}$. Thus, function $g'$ is 1-Lipschitz out of $N$.

\end{itemize}

To sum up, there exist positive values $a_i$ so that $g'$  is 1-Lip between all pairs of vertices of $G$. 
One can also check that $g'$ satisfies inequalities stated in the lemma: 
 $$g'(y)-g'(x) = f(y)+w_{xy}-g(x)=w_{xy}, $$ 
    $$g'(v_i) - f(v_i) =a_i \leq  w_{xy}-(f(y)-f(x)), \ j\leq t, $$
    $$g'(z) -f(z)= 0<  w_{xy}-(f(y)-f(x)) \ z\notin N,
   $$ 
  Let $f'=g'$, the proof is complete.

\end{proof}

A very similar proof gives the following result:
\begin{lemma}\label{lem:near-lip2}
Let $G=(V, E, w)$ be a weighted graph  and $x,y$ be two fixed vertices in $G$. Let $0< a <w_{xy}$. For any $1$-Lipschitz function $f'$ defined on $G$  such that $f'(y)-f'(x) =w_{xy}$, there exists a $1$-Lipschitz function $f$, such that $f(y)-f(x)=a$ and $|f'(z) -f(z)|\leq w_{xy}-a$ for all $z \in V$.
\end{lemma}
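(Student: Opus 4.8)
The plan is to avoid the delicate ordered-increment bookkeeping used in Lemma~\ref{lem:near-lip} and instead produce $f$ in one stroke by truncating $f'$ from above against a ``cone'' based at $y$. Concretely, I would set
\[
f(v) = \min\bigl\{\, f'(v),\ f'(x) + a + w_{vy} \,\bigr\} \qquad \text{for all } v \in V,
\]
where $w_{vy}=d(v,y)$ (recall the convention that $w$ records the distance between \emph{any} pair of vertices). The intuition is that $f'$ already realizes the maximal gap $f'(y)-f'(x)=w_{xy}$, so to shrink the gap down to $a$ we only need to cap values near $y$: the cone $v\mapsto f'(x)+a+w_{vy}$ pins $f(y)=f'(x)+a$ while agreeing with $f'$ away from $y$.

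First I would verify that $f$ is $1$-Lipschitz and has the right endpoint behaviour. Both $f'$ and the cone $v\mapsto f'(x)+a+w_{vy}$ are $1$-Lipschitz — the former by hypothesis, the latter because $v\mapsto w_{vy}$ is $1$-Lipschitz by the triangle inequality $|w_{uy}-w_{vy}|\le w_{uv}$ — and the pointwise minimum of $1$-Lipschitz functions is again $1$-Lipschitz. For the endpoints, since $a<w_{xy}$ we get $f(y)=\min\{f'(x)+w_{xy},\,f'(x)+a\}=f'(x)+a$, and since $a+w_{xy}>0$ we get $f(x)=\min\{f'(x),\,f'(x)+a+w_{xy}\}=f'(x)$. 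Hence $f(y)-f(x)=a$, as required.

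The only substantive step is the bound $|f'(v)-f(v)|\le w_{xy}-a$. Because $f(v)\le f'(v)$ by construction, it suffices to bound $f'(v)-f(v)$ from above. If the minimum is attained by $f'(v)$ then $f'(v)-f(v)=0$. Otherwise $f(v)=f'(x)+a+w_{vy}$, and using the $1$-Lipschitz estimate $f'(v)-f'(x)\le w_{vx}$ together with the triangle inequality $w_{vx}\le w_{vy}+w_{xy}$ we obtain
\[
f'(v)-f(v)=f'(v)-f'(x)-a-w_{vy}\le \bigl(w_{vy}+w_{xy}\bigr)-a-w_{vy}=w_{xy}-a .
\]
This gives the claimed estimate and completes the argument.

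I do not expect a genuine obstacle here: the construction sidesteps the cascading adjustments of Lemma~\ref{lem:near-lip}, and the only place care is needed is invoking the triangle inequality $w_{vx}\le w_{vy}+w_{xy}$ in the correct direction so as to convert the $1$-Lipschitz bound on $f'$ into the desired estimate. If one instead insists on literally mirroring the proof of Lemma~\ref{lem:near-lip} — shifting $f'(y)$ \emph{down} by $w_{xy}-a$, and then decreasing the values of the finitely many vertices $z$ with $f'(z)>f'(x)+a+w_{zy}$ in increasing order of their $f'$-values — then the main difficulty becomes exactly the one encountered there, namely checking that the admissible decrement ranges for distinct vertices are mutually compatible and never force a decrease exceeding $w_{xy}-a$; the min-construction above makes all of this automatic.
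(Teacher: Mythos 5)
Your construction is correct, and it is genuinely different from (and slicker than) the paper's argument. The paper proceeds as you anticipate in your last remark: it sets $g(y)=f'(x)+a$, keeps $g=f'$ elsewhere, identifies the finitely many vertices $v_1,\dots,v_t$ at which the Lipschitz condition against $y$ now fails, and then decreases their values by carefully chosen amounts $a_j$ in a prescribed order, checking case by case that the admissible decrement ranges are nonempty and mutually compatible and that no decrement exceeds $w_{xy}-a$. Your one-line formula $f(v)=\min\{f'(v),\,f'(x)+a+w_{vy}\}$ — a McShane-type truncation of $f'$ against the $1$-Lipschitz cone based at $y$ — makes all of that bookkeeping automatic: the minimum of two $1$-Lipschitz functions is $1$-Lipschitz, the endpoint values $f(y)=f'(x)+a$ and $f(x)=f'(x)$ drop out immediately from $0<a<w_{xy}$, and the error bound reduces to the single chain $f'(v)-f'(x)\le w_{vx}\le w_{vy}+w_{xy}$, which you apply in the correct direction. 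Your use of $w_{vy}$ for $d(v,y)$ between arbitrary pairs matches the convention the paper adopts just before Lemma~\ref{lem:near-lip}. The only thing the paper's iterative approach "buys" is symmetry with the proof of Lemma~\ref{lem:near-lip} (where the modification pushes values up rather than down); an analogous max-with-a-cone formula would streamline that lemma as well.
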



\begin{proof}
Define function $g$ on $G$ such that
\begin{equation*}
    \begin{cases}
  g(y) = f'(x) + a, \\
 g(z) = f'(z) & z\in V\setminus\{y\}. 
\end{cases}
\end{equation*}
We have 
$$g(y)-g(x) =a,$$
$$ g(y) - f'(y) = w_{xy}-a, $$
$$g(z)-f'(z)=0\leq w_{xy}-a \ \forall z\neq y.$$
Thus,  if $g$ is $1$-Lipschitz, let $f=g$,  we are done.

If $g$ is not $1$-Lipschitz, then there exists $z\neq x, y$ so that  $|g(z)-g(y)| >w_{yz}$ and one can check that it is the case $g(z)- g(y)>w_{yz}$.
Denote such vertices as $v_{1}, v_{2},\ldots, v_{t}$ in the order that $g(v_{1}) \geq g(v_{2}) \geq \cdots \geq g(v_{t})> g(y)$. Denote set $N=\{y, v_{1}, v_{2},\ldots, v_{t} \}$.
Observe that $g$ is 1-Lip for pair of vertices not in $N$ and we have $|g(v_i)-g(x)|$ not equal to $w_{xv_i}$, as $g(v_i)-g(x)>w_{yv_i}+g(y)-g(x) = w_{yv_i}+a >-w_{xv_i}$ and $g(v_i)-g(x)<-w_{yv_i}+a<-w_{yv_i}+w_{xy}<w_{xv_i}$. Thus adding an appropriate negative value to $g(v_i)$ will not affect the pair $x$ and $v_i$.


Now we  create a new function $g'$ from $g$ so that $g'$ is  1-Lipschitz on $G$. Let 
\begin{equation*}
    \begin{cases}
  g'(v_{j})=g(v_{j})-a_j & j\leq t, \\
  g'(u)=g(u) &  \text{otherwise}, 
\end{cases}
\end{equation*}
where values $a_j$ will be chosen from internal 
$$L_j=\Big[\max\{g(v_{j})-g(y)-w_{yv_j}, g(v_i)-g(x)-w_{xv_i}\}, w_{xy}-a \Big].$$ One can check that internal $L_j$ is non empty and $a_j$ is positive. 
 Further, let $a_j$ satisfy 
 $0<a_j-a_i<g(v_j)-g(v_i)+w_{v_{i}v_{j}}$ for all $1\leq j<i\leq t$, note we are able to achieve this purpose by choosing value of $a_j$ as large as possible in the reverse order (i.e., from $v_t$ to $v_1$). Next, we will confirm that $g'$ is 1-Lipschitz  on $G$. 
\begin{itemize}
    \item For $y$ and each $v_j$, $g'(v_{j})-g'(y)=g(v_{j})-a_j-g(y)<g(v_{j})-g(v_{j})+g(y)+w_{yv_j}-g(y)<w_{yv_j}$, $g'(v_{j})-g'(y)=g(v_{j})-a_j-g(y)>f'(v_j)-w_{xy}+a-f'(x)-a>w_{xv_j}-w_{xy}>-w_{yv_j}$, thus, $g'$ is 1-Lipschitz at $y$ and $v_j$.

   \item  For $v_{i}$ and $v_{j}$, $j<i$,
    $g'(v_{j})-g'(v_i)=g(v_{j})-g(v_i)-(a_{j}-a_i)<g(v_{j})-g(v_i)\leq w_{v_{i}v_{j}}$, 
and $g'(v_{j})-g'(v_i)=g(v_{j})-g(v_i)-(a_{j}-a_i)>-w_{v_jv_i}$.  Thus, $g'$ is 1-Lipschitz at $v_i$ and $v_j$.
\item 
For  $z\not\in N$ and each $v_j$'s,    $g'(v_j)-g'(z)=g(v_j)-a_j-g(z)< g(v_j)-g(z)\leq w_{zv_j}$ and $g'(v_j)-g'(z)=g(v_j)-a_j-g(z)>g(v_j)-g(z) -g(v_j)+g(x)+w_{xv_j} \geq -w_{xz}+w_{xv_j}>-w_{zvj}$.
Thus, function $g'$ is still 1-Lipschitz at  $z$ and $v_j$'s. 
\item For $u, v\notin N$, $|g'(u)-g'(v)|=|g(u)-g(v)|\leq w_{uv}$. Thus, function $g'$ is 1-Lipschitz out of $N$.

\end{itemize}

To sum up, there exist positive values $a_i$ so that function $g'(v)$ obtained from $g$ by reducing values $\{a_i\}$ is 1-Lipschitz between all pairs of vertices of $G$,  and $g'(y)-g'(x) =a$,  $|g'(z) -f(z)| <  w_{xy}-a$  for all $z \in V$ are satisfied. Let $g'=f$, the proof is complete.

\end{proof}

In order to show Theorem \ref{thm:sol_continuous}, we need some classical theorem on the existence and uniqueness of solutions to a system of ordinary differential equations.

\begin{lemma}\label{lem: existenceunique_ODE}\cite{bookODE}
Suppose that vector-valued  function $F(t, X)=\{f_1(t,X), \cdots, f_n(t, X)\}$ is continuous in some $n+1$ dimensional region:
$$R=\{(t,X): |t|\leq a, \left\lVert X-X_0\right\lVert \leq b\}, $$ and is Lipschitz continuous about $X=(x_1, x_2, \cdots,x_n)$.
Then the the following ODE's initial value problem 
$$\frac{d X}{dt}=F(t, X), \hspace{5mm} X(0)=X_0$$
 has a unique solution $X=X(t)$ at region $|t|\leq \alpha$, where 
 $$\alpha=\min\{a, \frac{b}{N}\}, \hspace{5mm} N=\max_{(t, X)\in R}\ \left\lVert F(t, X)\right\lVert. $$
\end{lemma}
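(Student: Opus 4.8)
The plan is to prove this classical result (the Picard--Lindel\"of theorem) by recasting the initial value problem as a fixed-point problem and invoking the Banach contraction principle. First I would observe that, by the fundamental theorem of calculus, a continuous function $X(t)$ solves the IVP $\frac{dX}{dt} = F(t,X)$, $X(0)=X_0$ on $|t|\le\alpha$ if and only if it solves the integral equation
$$X(t) = X_0 + \int_0^t F(s, X(s))\, ds.$$
This turns the differential problem into the problem of finding a fixed point of the operator $T$ defined by $(TX)(t) = X_0 + \int_0^t F(s, X(s))\, ds$; continuity of $F$ guarantees that $TX$ is well defined and continuous, and any fixed point is automatically $C^1$ and solves the original ODE.

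Next I would set up the complete metric space on which $T$ acts. Let $\mathcal{M}$ be the set of continuous functions $X:[-\alpha,\alpha]\to\mathbb{R}^n$ satisfying $\norm{X(t)-X_0}\le b$ for all $t$, equipped with the uniform norm $\norm{X}_\infty = \max_{|t|\le\alpha}\norm{X(t)}$. Since $\mathcal{M}$ is a closed subset of the Banach space $C([-\alpha,\alpha],\mathbb{R}^n)$, it is itself complete. The role of the constant $\alpha = \min\{a, b/N\}$ is precisely to make $T$ map $\mathcal{M}$ into itself: for $X\in\mathcal{M}$ we have $(s,X(s))\in R$, so $\norm{F(s,X(s))}\le N$, and therefore $\norm{(TX)(t)-X_0}\le N|t|\le N\alpha\le b$, which shows $TX\in\mathcal{M}$.

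The heart of the argument is the contraction estimate, which is where the Lipschitz hypothesis enters: if $L$ denotes the Lipschitz constant of $F$ in $X$, then for $X,Y\in\mathcal{M}$,
$$\norm{(TX)(t)-(TY)(t)} \le \int_0^{|t|} L\,\norm{X(s)-Y(s)}\, ds \le L|t|\,\norm{X-Y}_\infty.$$
Here lies the main obstacle: the self-map constant $\alpha=\min\{a,b/N\}$ does not by itself force $L\alpha<1$, so $T$ need not be a contraction under the plain uniform norm. I would circumvent this in one of two standard ways. The clean iterative route is to prove by induction that $\norm{(T^kX)(t)-(T^kY)(t)}\le \frac{(L|t|)^k}{k!}\norm{X-Y}_\infty$, whence $\norm{T^kX-T^kY}_\infty\le \frac{(L\alpha)^k}{k!}\norm{X-Y}_\infty$; since $\frac{(L\alpha)^k}{k!}\to 0$, some iterate $T^k$ is a genuine contraction. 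Alternatively, one may re-metrize $\mathcal{M}$ with the equivalent weighted (Bielecki) norm $\norm{X}_* = \max_{|t|\le\alpha} e^{-2L|t|}\norm{X(t)}$, under which $T$ itself contracts with factor $\tfrac12$.

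Finally I would apply the Banach fixed-point theorem. In the iterate formulation, the contraction $T^k$ on the complete space $\mathcal{M}$ has a unique fixed point; applying $T$ to it and invoking uniqueness of the fixed point of $T^k$ shows this point is also the unique fixed point of $T$ itself. Unwinding the equivalence between the integral equation and the IVP then produces the unique solution $X=X(t)$ on $|t|\le\alpha$, completing the proof. Uniqueness could equivalently be extracted directly from the contraction estimate via Gr\"onwall's inequality, but the fixed-point framework delivers existence and uniqueness in a single stroke.
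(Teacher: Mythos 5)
This lemma is the classical Picard--Lindel\"of existence and uniqueness theorem, which the paper imports from its ODE textbook reference without giving any proof, so there is no in-paper argument to compare yours against. Your proposal is the standard and correct proof: the equivalence of the IVP with the integral equation, the role of $\alpha \le b/N$ in making the Picard operator $T$ a self-map of the complete space $\mathcal{M}$, and --- the one genuine subtlety --- the fact that $L\alpha$ need not be smaller than $1$, which you correctly repair either by showing some iterate $T^k$ contracts with factor $(L\alpha)^k/k!$ or by re-metrizing with a Bielecki weighted norm. Either variant is sound, and your observation that the unique fixed point of $T^k$ is also the unique fixed point of $T$ closes the argument.
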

A vector-valued function satisfies a continuous or a Lipschitz condition in a region  if and only if its component functions satisfy these conditions in the same region. 
The following theorem is used to estimate the maximum existence interval of solutions of the following initial value problem:
\begin{align}\label{equ:sigeloed}
    \frac{dy}{dx} = f(x, y), \hspace{5mm} y(x_0)=y_0. 
\end{align}

For narrative convenience, we  call a function $\phi(x)$ as right-top solution to (\ref{equ:sigeloed}) if 
$$\frac{d\phi}{dx} > f(x, y), \hspace{5mm} \phi(x_0)\geq y_0. $$
And we  call a function $\Phi(x)$ as right-bottom solution to (\ref{equ:sigeloed}) if 
$$\frac{d\Phi}{dx} < f(x, y), \hspace{5mm} \Phi(x_0)\leq y_0. $$

\begin{lemma}\cite{bookODE}
Suppose $f(x, y)$ is a continuous function in the region $R=\{(x, y), x_0\leq x<b, -\infty< y< \infty\}$, and $(x_0, y_0)\in R$. Denote $[x_0, \beta_1)$ as the maximum existence interval of solution to (\ref{equ:sigeloed}). If (\ref{equ:sigeloed}) has right-top solution $\phi(x)$ and right-bottom solution $\Phi(x)$ and they have the same interval of solutions $[x_0, \beta)$, then $\beta_1\geq \beta$. 
\end{lemma}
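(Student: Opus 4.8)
The plan is to prove the statement with a two-part argument: a differential \emph{comparison (squeeze) inequality} trapping the true solution between the sub- and super-solution, followed by the standard \emph{continuation theorem} for ODE solutions. First I would fix the reading of the hypotheses, interpreting ``right-top solution'' as a super-solution with $\phi'(x) > f(x,\phi(x))$ and $\phi(x_0)\geq y_0$, and the second ``right-top solution'' (evidently a typo for right-bottom) as a sub-solution with $\Phi'(x) < f(x,\Phi(x))$ and $\Phi(x_0)\leq y_0$; both are defined on $[x_0,\beta)$, which forces $\beta\leq b$. Let $y(x)$ be the solution of \eqref{equ:sigeloed} with maximal right-interval $[x_0,\beta_1)$.

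The first and main step is to show $\Phi(x)\leq y(x)\leq \phi(x)$ on $[x_0,\min\{\beta_1,\beta\})$. For the upper bound set $h=\phi-y$, so $h(x_0)\geq 0$. If $h(x_0)=0$ then the strict super-solution inequality gives $h'(x_0)=\phi'(x_0)-f(x_0,y(x_0))=\phi'(x_0)-f(x_0,\phi(x_0))>0$, so $h>0$ just to the right of $x_0$; if $h(x_0)>0$ this is immediate. Were $h$ to vanish again, let $x_1$ be the first such point: then $h>0$ on an interval ending at $x_1$ and $h(x_1)=0$, so $h'(x_1)\leq 0$, i.e. $\phi'(x_1)\leq f(x_1,y(x_1))=f(x_1,\phi(x_1))$, contradicting the strict inequality $\phi'(x_1)>f(x_1,\phi(x_1))$. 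Hence $h\geq 0$ and $y\leq\phi$; the lower bound $\Phi\leq y$ is entirely symmetric. I emphasize that this uses only the continuity of $f$ together with the \emph{strictness} of the defining inequalities, so no Lipschitz condition or uniqueness of $y$ is needed.

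The second step is the continuation argument. Suppose toward a contradiction that $\beta_1<\beta$. Since $\phi,\Phi$ are continuous on $[x_0,\beta)$ and the compact interval $[x_0,\beta_1]$ lies inside it, there are constants $m\leq M$ with $m\leq \Phi\leq \phi\leq M$ on $[x_0,\beta_1]$. The comparison step then gives $m\leq y(x)\leq M$ for all $x\in[x_0,\beta_1)$, so the graph $(x,y(x))$ stays inside the compact set $K=[x_0,\beta_1]\times[m,M]\subset R$ (using $\beta_1<\beta\leq b$). On $K$ the continuous field $f$ is bounded, so $y'=f(x,y)$ is bounded, $y$ is uniformly continuous on $[x_0,\beta_1)$ and hence has a finite limit $y_*$ as $x\to\beta_1^-$; applying the local existence theorem at $(\beta_1,y_*)\in R$ and concatenating extends $y$ past $\beta_1$, contradicting the maximality of $\beta_1$. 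Therefore $\beta_1\geq\beta$.

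I expect the comparison step to be the delicate point: the whole argument turns on the strict inequalities in the definitions of super- and sub-solution, which is exactly what forbids a tangential re-crossing at the ``first return'' point $x_1$ and lets us avoid any uniqueness hypothesis on $f$. Once the squeeze bound is secured, the continuation step is routine bookkeeping with Peano's existence theorem.
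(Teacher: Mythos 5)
Your proof is correct. Note that the paper itself offers no proof of this lemma at all --- it is quoted from the cited ODE textbook \cite{bookODE} --- so there is no in-paper argument to compare against. Your two-step argument (the strict-inequality squeeze $\Phi\leq y\leq\phi$ via the ``first return'' contradiction, followed by the escape-from-compact-sets continuation using Peano existence at $(\beta_1,y_*)$) is the standard proof and is sound; in particular you are right that the strictness of the defining inequalities is what makes the comparison work without any Lipschitz or uniqueness hypothesis on $f$, and your reading of the second ``right-top'' as a typo for ``right-bottom'' is the only one consistent with the definitions the paper gives just above the lemma.
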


Now, we are ready to prove Theorem \ref{thm:sol_continuous}.

\begin{proof}[Proof of Theorem \ref{thm:sol_continuous}]

For a fixed $\delta > 0$, define
$$S = \{\langle w_1, w_2, \ldots, w_m \rangle: w_i > 0, \sum_{i\in [m]} w_i \leq  1\}$$
and
$$S_{\delta} = \{\langle w_1, w_2, \ldots, w_m \rangle: w_i \geq \delta, \sum_{i\in [m]} w_i \leq   1\}.$$
We first show that \eqref{eq:stochastic_continuous} has a unique solution in time interval $(0, T)$ for some $T>0$ in $S_{\delta}$ for any positive $\delta > 0$.
Note that
$$ S = \ds\bigcup_{\delta > 0} S_{\delta}.$$
It then follows that \eqref{eq:stochastic_continuous} has a unique solution in $S$.

By the existence and uniqueness theorem on systems of ODE, to show \eqref{eq:stochastic_continuous} has a unique solution in $S_{\delta}$, it suffices to show that $\kappa_e w_e$ is (uniformly) Lipschitz on $S_{\delta}$.

Let $D$ be the metric on $S_{\delta}$ induced by the $\infty$-norm, i.e., given $\vec{w}, \vec{w}' \in S_{\delta}$ with $\vec{w} = \langle w_1, \ldots, w_m \rangle$ and $\vec{w}' = \langle w_1', \ldots, w_m' \rangle$, $D(\vec{w},\vec{w}') = \max_{i\in [m]} |w_i - w_i'|$. We now show that for a given edge $e$, the function $\mu_e: \vec{w} \to \kappa_e(\vec{w}) w_e$ is Lipschitz continuous on $S_{\delta}$ equipped with the metric $D$.

Fix $e = xy$. Let $\vec{w},\vec{w}' \in S_{\delta}$ be arbitrarily chosen. By Theorem \ref{thm:curvature_laplacian},
$$\kappa(x,y) = \ds\inf_{\substack{f \in Lip(1)\\\nabla_{yx}f=1}} \nabla_{xy} \Delta f.$$

Note that $|w'_{h} - w_{h}| < \epsilon$ for any edge $h$ by our assumption. WLOG that $w'_{xy} > w_{xy}$ and write $w'_{xy} - w_{xy} =\epsilon_0\leq \epsilon$. Let $f$ be the function that achieves $\inf \{(\Delta f(x) - \Delta f(y)): f \in Lip(1), f(y)-f(x)=w_{xy}\}$ under $\vec{w}$. Note for these $f$, $f(y)-f(x)=w_{xy}< w'_{xy}$. According to Lemma \ref{lem:lip-extension}, there exists a 1-Lipschitz function $f^0$ under $\vec{w'}$ such that $|f^0(z)-f(z)|\leq \epsilon$. 
It follows from Lemma \ref{lem:near-lip} that there exists $f' \in Lip(1)$ under $\vec{w'}$ such that $f'(y)- f'(x) = w'_{xy}$ and $|f^0(z)-f'(z)| \leq \epsilon_0$. Thus, $$|f(z)-f'(z)| \leq  \epsilon+\epsilon_0.$$ 
Therefore, 
$$\kappa_e' w_e'=\ds\inf_{\substack{f \in Lip(1)\\f(y)-f(x)=w_{xy}'}}(\Delta f(x)-\Delta f(y)) \leq \Delta f'(x)-\Delta f'(y).$$

It follows that if $\kappa_e' w_e' \geq \kappa_e w_e$, 
\begin{align}\label{equ:casea}
\begin{split}
    |\mu_e(\vec{w}')-\mu_e(\vec{w})| & = \kappa_e' w_e' - \kappa_e w_e \\
    &=\ds\inf_{\substack{f \in Lip(1)\\f(y)-f(x)=w_{xy}'}}(\Delta f(x)-\Delta f(y)) -  \Big(\Delta f(x)-\Delta f(y)\Big) \\
    & \leq (\Delta f'(x)-\Delta f'(y)) - (\Delta f(x) - \Delta f(y))\\
    & \leq |\Delta f'(y)-\Delta f(y)| + |\Delta f'(x)-\Delta f(x)|.
\end{split}
\end{align}

While if $\kappa_e' w_e' \leq \kappa_e w_e$,  let $g'$ be the function that achieves $$\inf \{(\Delta g'(x) - \Delta g'(y)):g' \in Lip(1), g'(y)-g'(x)=w'_{xy}\}.$$ Note for these $g'$, $g'(y)-g'(x)=w'_{xy}> w_{xy}$.  By Lemma \ref{lem:near-lip2}, it follows that there exists $g\in Lip(1)$ such that $g(y)- g(x) = w_{xy}$ and $|g(z)-g'(z)| \leq \epsilon_0$. 
Then \begin{align}\label{equ:caseb}
\begin{split}
    |\mu_e(\vec{w}')-\mu_e(\vec{w})| & = \kappa_e w_e- \kappa_e' w_e' \\
    &=\ds\inf_{\substack{f \in Lip(1)\\f(y)-f(x)=w_{xy}}}(\Delta f(x)-\Delta f(y))-\Big(\Delta g'(x)-\Delta g'(y)\Big)    \\
    & \leq (\Delta g(x) - \Delta g(y)) -  (\Delta g'(x)-\Delta g'(y)) \\
    & \leq |\Delta g'(y)-\Delta g(y)| + |\Delta g(x)-\Delta g'(x)|.
\end{split}
\end{align}

Next, we evaluate the right side of inequality (\ref{equ:casea}), the result for (\ref{equ:caseb}) is similar, and we omit the details.

As $|f(z)-f'(z)| \leq  \epsilon +\epsilon_0$ for all $z\in V(G)$, we have $f'(z)-f'(y)\leq f(z)-f(y) +2(\epsilon +\epsilon_0)$ and $f'(z)-f'(y)\geq f(z)-f(y) -2(\epsilon +\epsilon_0)$.

If $\Delta f'(y)-\Delta f(y)\geq 0$,
then
   \begin{align*}
|\Delta f'(y)-\Delta f(y)| & =  \dss_{z \in N(y)}\frac{\gamma(w'_{yz})}{\dss_{u\in N(y)} \gamma(w'_{yu})} (f'(z) - f'(y)) -  \dss_{z \in N(y)} \frac{\gamma(w_{yz})}{\dss_{u\in N(y)} \gamma(w_{yu})}  (f(z) - f(y))\\
    				      & \leq  \dss_{z \in N(y)} \frac{\gamma(w'_{yz})}{\dss_{u\in N(y)} \gamma(w'_{yu})} (f(z) - f(y) + 2(\epsilon +\epsilon_0)) - \dss_{z \in N(y)} \frac{\gamma(w_{yz})}{\dss_{u\in N(y)} \gamma(w_{yu})} (f(z) - f(y))\\
				      & = 2(\epsilon +\epsilon_0)+ \dss_{z \in N(y)} (\frac{\gamma(w'_{yz})}{\dss_{u\in N(y)} \gamma(w'_{yu})}-  \frac{\gamma(w_{yz})}{\dss_{u\in N(y)} \gamma(w_{yu})} )(f(z) - f(y))
   \end{align*}

If $\Delta f'(y)-\Delta f(y)\leq 0$,
then
   \begin{align*}
|\Delta f'(y)-\Delta f(y)| & =   \dss_{z \in N(y)} \frac{\gamma(w_{yz})}{\dss_{u\in N(y)} \gamma(w_{yu})}  (f(z) - f(y)) -\dss_{z \in N(y)}\frac{\gamma(w'_{yz})}{\dss_{u\in N(y)} \gamma(w'_{yu})} (f'(z) - f'(y))\\
    				      & \leq 	 \dss_{z \in N(y)} \frac{\gamma(w_{yz})}{\dss_{u\in N(y)} \gamma(w_{yu})} (f(z) - f(y))-  \dss_{z \in N(y)} \frac{\gamma(w'_{yz})}{\dss_{u\in N(y)} \gamma(w'_{yu})} (f(z) - f(y) - 2(\epsilon +\epsilon_0)) \\
				      & =2(\epsilon +\epsilon_0) - \dss_{z \in N(y)} (\frac{\gamma(w'_{yz})}{\dss_{u\in N(y)} \gamma(w'_{yu})}-  \frac{\gamma(w_{yz})}{\dss_{u\in N(y)} \gamma(w_{yu})}) (f(z) - f(y))
   \end{align*}

Let $C$ be the  Lipschitz constant for the $\gamma$ function. 
 Since $\gamma$ is a positive Lipschitz continuous function over $[\delta, 1]$, then there exist $M>0$ so that $\gamma\geq M$.

For both cases, we have
  \begin{align*}
|\Delta f'(y)-\Delta f(y)| & =  \abs*{\dss_{z \in N(y)}\frac{\gamma(w'_{yz})}{\dss_{u\in N(y)} \gamma(w'_{yu})} (f'(z) - f'(y)) -  \dss_{z \in N(y)} \frac{\gamma(w_{yz})}{\dss_{u\in N(y)} \gamma(w_{yu})}  (f(z) - f(y))}\\
				      & \leq 4\epsilon + \dss_{z \in N(y)} |\frac{\gamma(w'_{yz})}{\dss_{u\in N(y)} \gamma(w'_{yu})}-  \frac{\gamma(w_{yz})}{\dss_{u\in N(y)} \gamma(w_{yu})}| |f(z) - f(y)| \\
				      &\leq 4\epsilon + \dss_{z \in N(y)}  |\frac{\gamma(w'_{yz})}{\dss_{u\in N(y)} \gamma(w'_{yu})}-  \frac{\gamma(w_{yz})}{\dss_{u\in N(y)} \gamma(w_{yu})}|  w_{yz}\\
				      &\leq 4\epsilon + \dss_{z\in N(y)} \abs*{\frac{\gamma(w'_{yz})}{\dss_{u\in N(y)} \gamma(w'_{yu})} -   \frac{\gamma(w_{yz})}{\dss_{u\in N(y)} \gamma(w_{yu})} } \\				  	    
				      &\leq 4\epsilon + \dss_{z\in N(y)}\frac{| \gamma(w'_{yz})- \gamma(w_{yz})|}{\min \lp \dss_{u\in N(y)} \gamma(w_{yu}),\dss_{u\in N(y)} \gamma(w'_{yu})\rp} \\
				       &\leq 4\epsilon +C\epsilon d_y \frac{1}{d_y M}\\
				      &\leq 4\epsilon +\frac{C\epsilon}{M}.
 \end{align*}

Similarly,  $ |\Delta f'(x)-\Delta f(x)| \leq 4\epsilon +\frac{C\epsilon}{M} $. It follows that
$$ |\mu_e(\vec{w}')-\mu_e(\vec{w})| \leq (8+\frac{2C}{M}) \epsilon.$$ This completes the proof that (\ref{eq:stochastic_continuous}) has a unique solution in time interval $(0, T)$ for some $T>0$. Now we further prove that $T$ can be extended to infinity. It is enough to prove the right-hand side of (\ref{eq:stochastic_continuous}) is linearly bounded by $w$. 

For any edge $h$, we have $0<w_h\leq 1$, by Lemma \ref{coro:curvaturebound}, we then have $-\frac{2}{w_h}< \kappa_h\leq 2$. Then we get
\begin{align*}
    \begin{split}
       \frac{d w_e}{d t} &>-2w_e+w_e \sum_{h\in E(G)} (-\frac{2}{w_h})w_h\\
        &> (-2-2|E(G)|)w_e 
    \end{split}
\end{align*}
and
\begin{align*}
    \begin{split}
       \frac{d w_e}{d t} 
       &<-(-\frac{2}{w_e})w_e + w_e \sum_{h\in E(G)} 2w_h\\
       &< 2w_e +2.
    \end{split}
\end{align*}
Since for all edges $e$, $w_e(0)e^{(-2-2|E(G)|)t}$ and $-1+(w_e(0)+1)e^{2t}$ are right-bottom and right-top solutions to the component problem  of (\ref{eq:stochastic_continuous}) and both of them exist in time interval $[0, \infty)$, then so does the solution of (\ref{eq:stochastic_continuous}). 
This completes the proof of Theorem \ref{thm:sol_continuous}.

\end{proof}

We define the unnormalized continuous Ricci flow system of equations as: 
\begin{equation}\label{eq:unnor_continuous}
\begin{cases}
    \frac{dw_e}{dt} = -\kappa_e w_e \\
      X(0) = X_0.
\end{cases}
\end{equation}
Changing (\ref{eq:stochastic_continuous}) in algorithm \ref{alg:continuousricciflow} by (\ref{eq:unnor_continuous}), we have the following corollary. 

\begin{corollary}
For any initial weighted graph $G$,  by fixing the exit condition (I), there exists a unique solution $X(t)$, for all time $t\in [0, \infty)$,  to the system of ordinary differential equations in \eqref{eq:unnor_continuous}. 
\end{corollary}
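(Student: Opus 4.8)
The plan is to deduce the corollary from Theorem \ref{thm:sol_continuous} through the space-scaling bijection between normalized and unnormalized flows already sketched in the introduction, rather than repeating the Lipschitz estimates from scratch. Let $\tilde{w}(t)$ denote the unique global solution of the normalized system \eqref{eq:stochastic_continuous} with the same initial data $X_0$, which exists on $[0,\infty)$ by Theorem \ref{thm:sol_continuous}. Since the Ollivier--Lin--Lu--Yau curvature is invariant under a global rescaling of the metric, writing $\tilde{\kappa}_e(t)$ for the curvatures of $\tilde{w}(t)$ and setting $K(t)=\sum_{h\in E(G)}\tilde{\kappa}_h(t)\,\tilde{w}_h(t)$, I would define the scaling factor
\[
\psi(t)=\Big(\sum_{h} w_h(0)\Big)\exp\!\Big(-\int_0^t K(s)\,ds\Big)
\]
and set $w_e(t)=\psi(t)\,\tilde{w}_e(t)$. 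A direct differentiation, using $\tilde{w}_e'=-\tilde{\kappa}_e\tilde{w}_e+\tilde{w}_e K$ and $\psi'=-\psi K$, shows that $w(t)$ satisfies $\frac{dw_e}{dt}=-\kappa_e w_e$ with $\kappa_e(t)=\tilde{\kappa}_e(t)$, i.e.\ $w(t)$ solves \eqref{eq:unnor_continuous}. This establishes existence on all of $[0,\infty)$.

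For this construction to be legitimate on the whole half-line I would verify that $\psi(t)$ is well defined, positive and $C^1$. This reduces to $K(t)$ being bounded and continuous on every finite interval: the upper bound $\tilde{\kappa}_h\le 2$ comes from the coupling estimate following Theorem \ref{thm:curvatureviacoupling}, the lower bound from Lemma \ref{coro:curvaturebound}, and in the normalized flow $0<\tilde{w}_h(t)\le 1$; hence the integral converges. Because $\psi(t)>0$, the scaling never collapses an edge, and since a uniform rescaling preserves all distance ratios, the exit condition (I) is triggered for $w$ exactly when it is triggered for $\tilde{w}$, so fixing exit condition (I) is consistent across the bijection.

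For uniqueness I would argue in the reverse direction. Given any solution $w(t)$ of \eqref{eq:unnor_continuous}, put $T(t)=\sum_{h}w_h(t)$; summing the equations gives $\frac{dT}{dt}=-\sum_e\kappa_e w_e=-T(t)\,\tilde{K}(t)$, where $\tilde{K}(t)=\sum_e\kappa_e(w(t))\,\tilde{w}_e(t)$ is read off from the normalized trajectory $\tilde{w}=w/T$. By the forward direction of the bijection $\tilde{w}$ solves the normalized system with initial data $X_0/T(0)$, so by the uniqueness part of Theorem \ref{thm:sol_continuous} it is uniquely determined; consequently $\tilde{K}(t)$ is determined, $T(t)=T(0)\exp(-\int_0^t\tilde{K}(s)\,ds)$ is determined, and therefore $w=T\tilde{w}$ is uniquely determined.

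The main obstacle I anticipate is not any single estimate but making the bijection rigorous as a statement about solutions carrying a \emph{moving} exit condition: one must ensure that the edge deletions dictated by condition (I), and hence the sequence of graphs $G^t$ on which the ODE is posed, occur at identical times for the two trajectories, so that the flows traverse the same combinatorial history. Granting scale-invariance of curvature and of distance ratios this is automatic, but it is the point deserving care. As a fallback I would otherwise reprove the corollary directly, exactly as in Theorem \ref{thm:sol_continuous}: the right-hand side $-\kappa_e w_e=-\mu_e(\vec{w})$ is the very function whose Lipschitz continuity was already established (now on a domain $\{w_i\ge\delta,\ w_i\le L\}$ with $\gamma$ Lipschitz on $[\delta,L]$), giving short-time existence and uniqueness via Lemma \ref{lem: existenceunique_ODE}, and then comparison against $w_e(0)e^{-2t}$ from below (using $\kappa_e\le 2$) and against the Gr\"onwall bound $\max_h w_h(t)\le \max_h w_h(0)\,e^{2t}$ from above (using $\kappa_e\ge -2D(G)/w_e$, so $-\kappa_e w_e\le 2\max_h w_h$) rules out finite-time collapse or blow-up, extending the solution to $[0,\infty)$.
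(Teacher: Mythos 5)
Your proposal is correct and follows essentially the route the paper intends: the paper states the corollary without a separate proof, relying on the normalized/unnormalized scaling bijection worked out in the introduction together with Theorem \ref{thm:sol_continuous}, which is exactly the deduction you carry out (with the useful extra step of reconstructing the scale factor as $T(0)\exp(-\int_0^t \tilde K)$ to make the correspondence an actual existence-and-uniqueness argument). Your fallback via the Lipschitz estimate on $-\kappa_e w_e$ is likewise consistent with the paper's Theorem \ref{thm:sol_continuous} machinery, and your observation that the domain must be enlarged to $\{\delta \le w_i \le L\}$ for the unnormalized flow is a legitimate point the paper glosses over.
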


\section{Solutions to  the continuous Ricci flow}\label{sec:SolutionstothecontinuousRicciflow}
In this section we will exhibit some of the solutions of the continuous Ricci flow. On general graphs, there is no explicit expression for Ricci curvature, for each edge, $\kappa_e(t)$ can be expressed independently as a infimum of expression involved continuous functions. In addition, the Right-hand-side of (\ref{eq:stochastic_continuous}) is non-linear,  these make it not easy to study the convergence result of Ricci flow.

In order to reduce the number of metric variables $w_e$ evolved in the ODE, we solve the Ricci flow on path of length $2$.
Let $G=(V, w, \mu)$ be defined on a weighted path of length $2$. Denote the vertices in $V$ as $\{x, z, y\}$. By definitions, the function $\mu=\{\mu_x^{\alpha}, \mu_y^{\alpha}, \mu_z^{\alpha}\}$ is as follows:

\begin{align*}
\mu_x^{\alpha}(v)=
    \begin{cases}
      \alpha & \mbox{if}\   v=x\\
     1-\alpha &  \mbox{if}\   v=z,
    \end{cases}\ \ \
    \mu_y^{\alpha}(v)=
    \begin{cases}
      \alpha & \mbox{if}\   v=y\\
     1-\alpha &  \mbox{if}\   v=z,
    \end{cases}\ \ \
    \mu_z^{\alpha}(v)=
    \begin{cases}
      \alpha & \mbox{if}\  v=z\\
     a_x(1-\alpha) &  \mbox{if}\   v=x\\
     a_y(1-\alpha) &  \mbox{if}\   v=y,
    \end{cases}
\end{align*}
where $a_x=\gamma(w_{zx})/(\gamma(w_{zx})+\gamma(w_{zy}) )$ and $a_y=\gamma(w_{zy})/(\gamma(w_{zx})+\gamma(w_{zy}) )$, simply speaking,  $a_x, a_y$ are functions of $w(t)$. %

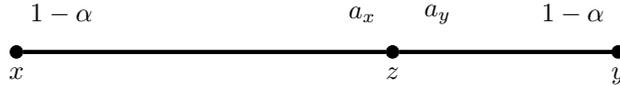
\begin{figure}[H]
 \centering
\begin{tikzpicture}[scale=2, vertex/.style={circle, draw=black, fill=black}]
\node at (0.5,0) [vertex] (v1) [label=below:{$z$}, scale=0.5] {};
\node at (-2,0) [vertex] (v2) [label=below:{$x$}, scale=0.5] {};
\node at (2,0) [vertex] (v3) [label=below:{$y$},  scale=0.5] {};
\draw [draw=black, ultra thick] (v1) -- (v2);
\draw [draw=black, ultra thick] (v1) -- (v3);
\node at (0.3,1/4) {$a_{x}$} ;
\node at (0.8,1/4) {$a_y$} ;
\node at (-1.7,1/4) {$1-\alpha$};
\node at (1.7,1/4) {$1-\alpha$};
\end{tikzpicture}
\caption{Path of length $2$.}
\end{figure}
The Ollivier-Lin-Lu-Yau curvature $\kappa$ is then as follows:
$$\kappa_{xz}=1 + a_x - a_y\frac{w_{yz}}{w_{xz}}, \
 \kappa_{yz} =1 + a_y - a_x\frac{w_{xz}}{w_{yz}}. $$
By \eqref{eq:stochastic_continuous}, we have that
$$\frac{d w_{xz}}{d t}=w_{yz} - a_x, \ \frac{d w_{yz}}{d t}=w_{xz}-a_y.$$

\subsection{Unnormalized continuous Ricci flow}
First we give an example showing that the unnormalized Ricci flow (\ref{eq:unnor_continuous}) would converge to a point if the initial metric satisfies a certain conditions. On the path graph of length $2$, for arbitrary choice of $\gamma$, we have a system of homogeneous linear differential equations:
\begin{align}\label{example:unormalized}
\begin{cases}
    \frac{d w_{xz}}{d t} &= -(1+a_x)w_{xz} +a_yw_{yz}, \\
    \frac{d w_{yz}}{d t} &= a_xw_{xz}  -(1+a_y) w_{yz}.
\end{cases}
\end{align}
Since $a_x+a_y=1$, then the associated matrix always has eigenvalues $\lambda_1=-1, \lambda_2=-2$. If we set $a_x=a_y=\frac{1}{2}$, then corresponding eigenvectors are $(0.7071, 0.7071)^T$, and $(0.7071, -0.7071)^T$, then (\ref{example:unormalized}) has solution of form:
\begin{align*}
    \begin{cases}
     w_{xz}(t)=0.7071(c_1e^{-t} + c_2e^{-2t}),\\
     w_{yz}(t)=0.7071(c_1e^{-t} - c_2e^{-2t}).
    \end{cases}
\end{align*}
If the initial metric satisfies $w_{xz}(0)=w_{yz}(0)$, i.e. $c_2=0$, then $w_{xz}(t)=w_{yz}(t)=0.7071\times c_1e^{-t}$. Thus both $w_{xz}(t),  w_{yz}(t)$ are decreasing functions with time $t$ which implies that
the edge length converge to zero, in this case the graph converges to a point.

\subsection{Normalized continuous Ricci flow}\label{sec:examples} 
Although by Theorem \ref{thm:sol_continuous} we are guaranteed to have an unique solution to the system of ODEs in \eqref{eq:stochastic_continuous}, the types of solutions we obtain depend on the choice of $\gamma$ in \eqref{eq:weight_dist} and sometimes the initial condition. In this subsection, we give examples of different solutions to \eqref{eq:stochastic_continuous} defined on the same path graph of length $2$. 
The results also answers the question asked at the beginning of the paper, we will see that the limit of the Ricci-flow on path exists, and it is possible to have a constant curvature although the initial graph does not have.

\begin{example}

\begin{description}

\item[\textbf{Constant solution:}] If we pick $a_x = w_{yz}$ and $a_y = w_{xz}$,
note $\gamma$ is the function satisfying $\gamma(w_{zx})/\gamma(w_{zy}) =w_{zy}/w_{zx}$, then $|\gamma(w_{zx})-\gamma(w_{zy})|=C|w_{zy}-w_{zx}|$ for some constant $C$.
It follows that $\frac{d w_{xz}}{dt}=\frac{dw_{yz}}{dt}=0$. Hence $w_{xz}(t) = w_{xz}(0)$ and $w_{yz}(t) = w_{yz}(0)$ for all $t$ and

$$\kappa_{xz}(t)=\kappa_{yz}(t)= 1. $$

\item[\textbf{Stable solution without collapsing:}] If we pick $a_x = w_{xz}$ and $a_y = w_{yz}$, note  $\gamma$ is the function satisfying $\gamma(w_{zx})/\gamma(w_{zy}) =w_{zx}/w_{zy}$, then $|\gamma(w_{zx})-\gamma(w_{zy})|=C|w_{zx}-w_{zy}|$ for some constant $C$.
Then
\begin{align*}
    \frac{d w_{xz}}{d t} &= w_{yz} - w_{xz} = 1 - 2w_{xz},\\
    \frac{d w_{yz}}{d t} &= w_{xz} - w_{yz} = 1 - 2w_{yz}.
\end{align*}
It follows that
\begin{align*}
    w_{xz}(t) &=\frac{1}{2} - \lp \frac{1}{2}-w_{xz}(0)\rp e^{-2t}, \\
    w_{yz}(t) &=\frac{1}{2} - \lp \frac{1}{2}-w_{yz}(0)\rp e^{-2t}.
\end{align*}
Thus $w_{xz}(t) \to \frac{1}{2}$ and $w_{yz}(t) \to \frac{1}{2}$ as $t \to \infty$,
and
$$\kappa_{xz}(t)\to 1, ~\kappa_{yz}(t)\to 1. $$

\item[\textbf{Stable solution with collapsing:}] Suppose WLOG that $w_{xz}(0) > w_{yz}(0)$. If we pick $a_x = \frac{w_{yz}^2}{w_{xz}^2+w_{yz}^2}$ and $a_y = \frac{w_{xz}^2}{w_{xz}^2+w_{yz}^2}$, note $\gamma$ is the function satisfying $\gamma(w_{zx})/\gamma(w_{zy}) =w_{zy}^2/w_{zx}^2$, then $|\gamma(w_{zx})-\gamma(w_{zy})|=C|w_{zy}^2-w_{zx}^2|=C|w_{zy}-w_{zx}|$ for some constant $C$.

Then
\begin{align*}
    \frac{d w_{xz}}{d t} &= w_{yz} - \frac{w_{yz}^2}{w_{xz}^2+w_{yz}^2} = \frac{w_{yz}(w_{xz}^2 - w_{yz} w_{xz})}{w_{xz}^2+w_{yz}^2} > 0, \\
    \frac{dw_{yz}}{dt} &= w_{xz} - \frac{w_{xz}^2}{w_{xz}^2+w_{yz}^2} = \frac{w_{xz}(w_{yz}^2 - w_{yz} w_{xz})}{w_{xz}^2+w_{yz}^2} < 0.
\end{align*}
It follows that $w_{xz}(t) \to 1$ and $w_{yz}(t) \to 0$ as $t \to \infty$
and
$$\kappa_{xz}(t)\to 2.$$ The edge $yz$ converges to point $z$ eventually.

\end{description}

\end{example}

\section{Convergence of Ricci flow}\label{sec:ConvergenceofRicciflow}

In this section, we prove  convergence result of Ricci flow on  path and star graphs equipped with any initial weight. 
A graph minor is obtained from a given graph by repeatedly removing or contracting edges. From the path instance,  we will also see its graph minors under Ricci flow accompanied with appropriated edge operations.  

\subsection{Ricci flow on path}
Let P be a finite path of length $n\geq 3$,  denote the edge set of P as $\{e_{i}\}_{i= 1}^{n}$ where  $e_1, e_{n}$ are leave edges. 
We prove the following result:
\begin{theorem}\label{thm:pathconverge}
Let $\gamma(x)=\frac{1}{x}$. Ricci flow (\ref{eq:stochastic_continuous}) on path P  converges. By contracting edges with small weights, any initial weighted path will converge to a path of length $2$. 
\end{theorem}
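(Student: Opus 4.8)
The plan is to reduce the flow on the path to an explicit low-dimensional autonomous system by first computing the curvature of every edge exactly. Label the vertices $v_0,v_1,\dots,v_n$ so that $e_i=v_{i-1}v_i$ carries weight $w_i=w_{e_i}(t)$, and note $d(v_i,v_j)=\sum_{k=i+1}^{j}w_k$. The decisive arithmetic feature of the choice $\gamma(x)=1/x$ is the identity $\gamma(w)\,w=1$. First I would evaluate $\kappa_{e_i}$ through the Laplacian formula of Theorem \ref{thm:curvature_laplacian}, taking $f$ to be the signed arc-length coordinate $f(v_j)=\sum_{k\le j}w_k$, which is $1$-Lipschitz with equality on every edge (the path has unique geodesics, so exit condition (I) never fires) and meets the normalization $\nabla_{yx}f=1$ on the edge under consideration. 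Because $\gamma(w_k)w_k=1$ on each edge, $\Delta f$ vanishes at every internal vertex, while at a leaf vertex $\Delta f$ equals $\pm$(the incident leaf weight). The substantive part of this step is the optimality of this coordinate function: at each endpoint of the chosen edge, the term of $\Delta f$ coming from the constrained neighbor is fixed, and the single free neighbor contributes a term that the monotone choice simultaneously minimizes at the left endpoint and maximizes at the right endpoint, so no competing $1$-Lipschitz function lowers $\nabla_{xy}\Delta f$. This yields $\kappa_{e_i}=0$ for every internal edge $2\le i\le n-1$ and $\kappa_{e_1}=\kappa_{e_n}=1$ for the two leaf edges.

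Next I would substitute these values into \eqref{eq:stochastic_continuous}. Setting $L:=w_1+w_n$, the weighted curvature sum collapses to $\sum_h\kappa_h w_h=L$, so the system becomes $\dot w_1=w_1(L-1)$, $\dot w_n=w_n(L-1)$, and $\dot w_i=w_i L$ for $2\le i\le n-1$. Adding the two leaf equations gives the scalar logistic-type equation $\dot L=-L(1-L)$, whose solution with $0<L(0)<1$ decreases monotonically to $0$ and is asymptotically $\sim Ce^{-t}$, so that $\int_0^\infty L(s)\,ds<\infty$. From the integrating-factor representation $w_i(t)=w_i(0)\exp\!\big(\int_0^t L\big)$ for internal edges and $w_1(t)=w_1(0)\exp\!\big(\int_0^t(L-1)\big)$ for leaf edges, I conclude that both leaf weights tend to $0$ while every internal weight converges to a strictly positive limit; moreover all ratios among internal edges, and the ratio $w_1/w_n$, are preserved in time. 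This proves that the flow converges, establishing the first assertion.

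Finally I would account for the edge operations. Since $w_1(t),w_n(t)\to 0$ while internal weights only grow, the leaf weights eventually drop below the merge threshold $mt$, triggering exit condition (II); contracting the offending leaf edge removes one leaf vertex and produces a path of length $n-1$ (renormalizing so the weights again sum to $1$, which does not alter any curvature since $\kappa$ is scale-invariant). On the shorter path the two edges newly incident to a leaf acquire curvature $1$ and begin to decay by exactly the same mechanism, so the contraction step repeats and each contraction lowers the length by one. The only configuration on which no leaf weight decays is the path of length $2$: there both edges are leaf edges with $\kappa=1$, whence $\dot w_i=-w_i+w_i(w_1+w_2)=0$ and the weights are stationary. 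Thus the iteration halts exactly at a path of length $2$, proving the second assertion.

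The step I expect to be the main obstacle is the exact curvature computation, specifically the optimality argument for the coordinate function in Theorem \ref{thm:curvature_laplacian}: one must rule out non-monotone $1$-Lipschitz competitors and verify that the extremizing neighbor-values at the two endpoints are jointly realizable by a single globally $1$-Lipschitz function. Once the clean values $\kappa=0$ (internal) and $\kappa=1$ (leaf) are in hand, the remaining ODE analysis and the contraction bookkeeping are routine.
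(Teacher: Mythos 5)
Your proposal is correct and follows essentially the same route as the paper: with $\gamma(x)=1/x$ the curvature is exactly $0$ on internal edges and $1$ on leaf edges, so $\sum_h\kappa_h w_h=w_{e_1}+w_{e_n}$, leaf weights decay while internal weights grow, and repeated contraction of the decaying leaves terminates at the stationary length-$2$ path. Your treatment is somewhat more quantitative than the paper's (the logistic equation for $L=w_1+w_n$ and the explicit positive limits of the internal weights, versus the paper's bare monotonicity argument), and you derive the curvature values from the Laplacian formulation rather than quoting the coupling-based formula, but these are refinements of the same argument rather than a different one.
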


\begin{proof}
Recall $D_u=\sum_{x\in N(u)}\gamma(w_{ux})$, by calculation, if $e=(u, v)$ is non-leaf edge with $x-u-v-y$,  
the Ollivier-Lin-Lu-Yau Ricci curvature is
\begin{align*}
    \kappa_{uv}&=\frac{\gamma(w_{uv})}{D_u}+\frac{\gamma(w_{vu})}{D_v}-\frac{w_{ux}}{w_{uv}}\frac{\gamma(w_{ux})}{D_u}-\frac{w_{vy}}{w_{uv}} \frac{\gamma(w_{vy})}{D_v}\\
    &=\frac{\frac{1}{w_{uv}}}{\frac{1}{w_{uv}}+\frac{1}{w_{ux}}} +\frac{\frac{1}{w_{uv}}}{\frac{1}{w_{uv}}+\frac{1}{w_{vy}}} -\frac{1}{w_{uv}(\frac{1}{w_{uv}}+\frac{1}{w_{ux}})}-\frac{1}{w_{vu}(\frac{1}{w_{uv}}+\frac{1}{w_{vy}})}\\
    &=0.
\end{align*}
If $e=(x, u)$ is a leaf edge with with $x-u-v$ and $d_x=1$, then
 \begin{align*}
    \kappa_{ux}&=1+\frac{\gamma(w_{ux})}{D_u}-\frac{w_{uv}}{w_{ux}}\frac{\gamma(w_{uv})}{D_u}\\
    &=1+\frac{\frac{1}{w_{ux}}}{\frac{1}{w_{uv}}+\frac{1}{w_{ux}}} -\frac{1}{w_{ux}(\frac{1}{w_{uv}}+\frac{1}{w_{ux}})}\\
    &=1. 
\end{align*}
Then  $\sum_{h\in E} \kappa_hw_h =w_{e_1}+w_{e_{n}}, $ since $\sum_e w_e(t)= 1$ for all time $t$, then 
$$\frac{d w_{e_i}}{d t} =w_{e_i}(-1+w_{e_1}+w_{e_{n}})<0, ~~ i\in \{1, n\},$$ 
$$\frac{d w_{e_i}}{d t} =w_{e_i}(0+w_{e_1}+w_{e_{n}})> 0, ~~ i\in \{2,\ldots, n-1\}.$$ 

Then $w_{e_1}(t),  w_{e_n}(t)$ decrease  monotonically and weight $w_{e_i}$ on non-leave edges $e_i$ increase  monotonically, 
by repeatedly contracting edges with small enough weight (leaves), eventually the path converge to a path of length two.  
Refer to the constant solution of Example 1,  weights on these two edges will not change any more.

\end{proof}

\subsection{Ricci flow on star}
A star is a tree with one internal node and a number of leaves. 
By choosing $\gamma(x)=\frac{1}{x}$, we can prove that Ricci flow on any initial weighted star graph converges.

\begin{figure}[ht]
    \centering
 \includegraphics[scale=0.3]{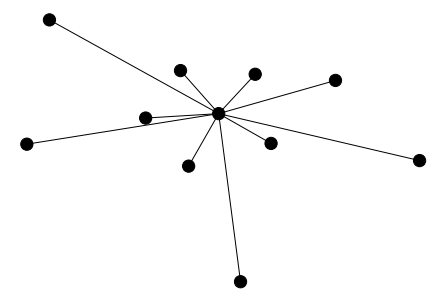}
 \hfil
 \includegraphics[scale=0.3]{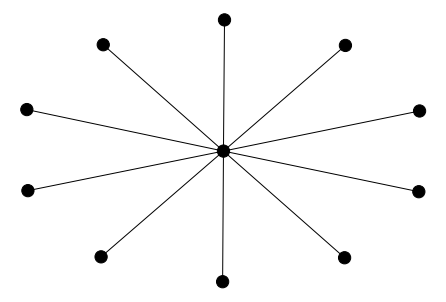}
 \caption{An non-constant weighted star converges to constant-weighted star}
\end{figure}

\begin{theorem}\label{thm:starconverge}
Let $G$ be a star with initial weight $\vec{w}(0)$ and $\gamma(x)=1/x$. 
Denote the internal node of $S_n$ as $u$, leave as $x$, $3\leq d_u<\infty$. 
The Ricci flow  (\ref{eq:stochastic_continuous}) deforms the weight $\vec{w}(0)$ of $G$ to a  weight of constant value $\frac{1}{d_u}$. 
\end{theorem}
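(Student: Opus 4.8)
The plan is to make the curvature fully explicit, reduce the normalized flow to a transparent system of ODEs, and then close with a Lyapunov argument. Write $n := d_u \ge 3$, label the leaves $x_1,\dots,x_n$, and abbreviate $w_i := w_{u x_i}$ together with $D_u := \sum_{j=1}^n \gamma(w_j) = \sum_{j=1}^n 1/w_j$. Every edge of a star is a leaf edge, so the same computation used for leaf edges in the proof of Theorem \ref{thm:pathconverge} applies; alternatively one evaluates the Laplacian formula of Theorem \ref{thm:curvature_laplacian} at the minimizing $1$-Lipschitz function $f(u)=0$, $f(x_i)=w_i$, $f(x_j)=-w_j$ for $j\neq i$ (which is $1$-Lipschitz because $|f(x_j)-f(x_k)|=|w_k-w_j|\le w_j+w_k=d(x_j,x_k)$). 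Either way I would obtain
\begin{equation*}
\kappa_i := \kappa_{u x_i} = 1 + \frac{2-n}{w_i D_u},
\end{equation*}
which for $n=2$ recovers the value $1$ from the path case. Substituting into \eqref{eq:stochastic_continuous} and using $\sum_j w_j = 1$ collapses the nonlinear right-hand side: since $\kappa_i w_i = w_i + (2-n)/D_u$ we get $\sum_j \kappa_j w_j = 1 + n(2-n)/D_u$, and therefore
\begin{equation*}
\frac{dw_i}{dt} = \frac{n-2}{D_u}\bigl(1 - n w_i\bigr), \qquad i = 1,\dots,n.
\end{equation*}
The unique equilibrium compatible with $\sum_i w_i = 1$ is $w_i \equiv 1/n = 1/d_u$, the claimed limit; one also checks $\sum_i dw_i/dt = 0$, consistent with conservation of total length.

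The only coupling between coordinates is the common positive factor $(n-2)/D_u$, and the next step is to keep it bounded away from zero. Since $(n-2)/D_u>0$, the sign of $dw_i/dt$ equals the sign of $1-nw_i$, so weights below $1/n$ increase and weights above $1/n$ decrease. In particular the smallest weight $m=\min_i w_i$ always satisfies $m\le 1/n$ (it is at most the mean), so its derivative is nonnegative; by the standard lower Dini-derivative argument for the minimum of finitely many smooth functions, $t\mapsto \min_i w_i(t)$ is non-decreasing. Hence $w_i(t)\ge \delta_0 := \min_i w_i(0) > 0$ for all $i$ and all $t$, giving the uniform bound $D_u \le n/\delta_0$ and thus $(n-2)/D_u \ge (n-2)\delta_0/n =: \lambda > 0$. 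This is exactly the step that rules out edge collapse and makes the flow uniformly contracting.

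To finish I would use the Lyapunov function $V(t) = \sum_{i=1}^n (w_i - 1/n)^2$. Because $1 - n w_i = -n(w_i - 1/n)$,
\begin{equation*}
\frac{dV}{dt} = 2\sum_i \Bigl(w_i - \tfrac1n\Bigr)\frac{dw_i}{dt} = -\frac{2n(n-2)}{D_u}\,V \le -2n\lambda\, V,
\end{equation*}
so $V(t) \le V(0)\,e^{-2n\lambda t} \to 0$ as $t\to\infty$. This yields $w_i(t) \to 1/d_u$ for every $i$, and evaluating the curvature at the limit gives $D_u = n^2$ and $\kappa_i \to 1 + (2-n)/n = 2/n$, i.e.\ the limiting metric is constant-weighted with constant curvature $2/d_u$.

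\textbf{Main obstacle.} The conceptual crux is the a priori control of $D_u$: a naive reading of the ODE allows $D_u=\sum 1/w_j$ to blow up if some weight tends to $0$, which would let the contraction rate $(n-2)/D_u$ degenerate and break convergence. Resolving this is precisely what the minimum-principle step does, so I expect that argument — rather than the curvature computation or the Lyapunov estimate, both of which are routine once the weights are bounded below — to require the most care. A secondary technical point is justifying the optimal Lipschitz function in the curvature formula, but on a star this is a short verification.
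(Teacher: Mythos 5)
Your proposal is correct, and it reaches the same reduced ODE as the paper --- you both compute $\kappa_{ux_i}=1+\frac{2-d_u}{w_iD_u}$ and arrive at $\frac{dw_i}{dt}=\frac{n-2}{D_u}(1-nw_i)$, which is exactly the paper's $\frac{dw_e}{dt}=w_eF_e$ with $F_{ux}=\frac{d_u-2}{D_u}\bigl(\frac{1}{w_{ux}}-d_u\bigr)$ --- but from that point on your argument is genuinely different. The paper proves that $F_e$ never changes sign by establishing the differential inequality $|F_{ux}'|\le\bigl(d_u-2+\max_{z\sim u}|F_{uz}|\bigr)|F_{ux}|$ and invoking a Hamilton-type maximum principle (Lemma \ref{lem:hamiltonlemma} and Corollary \ref{coro:hamiltoncoro}); this makes each $w_e(t)$ monotone and bounded, hence convergent, and then Barbalat's lemma (Lemma \ref{lem:barlem}) forces $w_e'\to 0$, so $F_e\to 0$ and $w_e\to 1/d_u$. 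You instead control the flow by showing $\min_i w_i(t)$ is non-decreasing (since the minimum is at most the mean $1/n$ and $n\ge 3$ makes the prefactor positive), which bounds $D_u$ above and keeps the contraction rate $\frac{n-2}{D_u}\ge\lambda>0$, and then close with the Lyapunov function $V=\sum_i(w_i-1/n)^2$ satisfying $V'=-\frac{2n(n-2)}{D_u}V$. Both are sound; the trade-off is that your route yields a quantitative exponential convergence rate governed by $\min_iw_i(0)$ and $n$ (and, as a by-product, the limiting constant curvature $2/d_u$), whereas the paper's soft monotonicity-plus-Barbalat machinery gives no rate but is the sort of argument one might hope to reuse on graphs where no clean Lyapunov function presents itself. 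Your one acknowledged loose end --- that the exhibited $1$-Lipschitz function actually attains the infimum in Theorem \ref{thm:curvature_laplacian} --- is harmless here, since the resulting curvature value agrees with the formula the paper itself asserts by direct calculation.
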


To prove Theorem \ref{thm:starconverge}, we first need to prove  $w_e(t)$ is a monotone function for any edge $e$ in the graph, thus the $\lim_{t\to \infty}w_e(t)$ exists and is finite, then we prove the limit of $w_e(t)$ is $\frac{1}{d_u}$.
Let $F_e(t)=\sum \kappa_h(t) w_h(t) - \kappa_e(t)$. Then $\frac{d w_e}{d t}=w_e(t)F_e(t)$. 
We will prove the following result.

\begin{lemma}\label{thm:alwaygreaterthan0}
Let $G$ be a star and $\gamma(x)=1/x$. 
For any edge $e\in E(G)$, if $F_e(0)\geq 0$, then for all $t\in (0, \infty)$, $F_e(t)\geq 0$;  if $F_e(0)\leq 0$, then for all $t\in (0, \infty)$, $F_e(t)\leq 0$. Thus $w_e(t)$ is a monotone function over time $t$. 
\end{lemma}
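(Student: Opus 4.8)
The plan is to reduce the statement to an explicit sign-preservation argument for a single scalar quantity attached to each edge. Since every edge of a star is a leaf edge from the viewpoint of its leaf endpoint, it suffices to analyze one edge type. First I would compute the curvature explicitly. Writing the edges as $e_i=(u,x_i)$ with weights $w_i=w_{ux_i}$, setting $D_u=\sum_j \gamma(w_j)=\sum_j 1/w_j$, and applying the Laplacian formula of Theorem~\ref{thm:curvature_laplacian}: the constraint $\nabla_{x_i u}f=1$ forces $f(x_i)-f(u)=w_i$, and minimizing $\nabla_{ux_i}\Delta f$ amounts to pushing each difference $f(x_j)-f(u)$ with $j\neq i$ to its most negative Lipschitz-admissible value $-w_j$, which is consistent since then $|f(x_i)-f(x_j)|=w_i+w_j=d(x_i,x_j)$. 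This yields
$$\kappa_{ux_i}=1-\frac{d_u-2}{w_i D_u}.$$

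Next I would assemble $F_{e_i}$. Using $\sum_i w_i=1$ (maintained throughout by the normalized flow), a direct summation over the $d_u$ leaves gives $\sum_h \kappa_h w_h = 1 - \frac{d_u(d_u-2)}{D_u}$, so that
$$F_{e_i}=\sum_h\kappa_h w_h-\kappa_{ux_i}=\frac{(d_u-2)(1-d_u w_i)}{w_i D_u}.$$
Because $d_u\geq 3$, $w_i>0$ and $D_u>0$, the sign of $F_{e_i}(t)$ coincides with the sign of $1-d_u w_i(t)$ at every time $t$; in particular $w_i=1/d_u$ is its unique zero.

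The decisive step is to show this sign never flips. I would track $g_i(t):=1-d_u w_i(t)$ directly. Since $\frac{dw_i}{dt}=w_i F_{e_i}=\frac{(d_u-2)(1-d_u w_i)}{D_u}$, differentiating $g_i$ produces the linear homogeneous equation
$$\frac{dg_i}{dt}=-\frac{d_u(d_u-2)}{D_u(t)}\,g_i(t),$$
whose solution is $g_i(t)=g_i(0)\exp\!\big(-\!\int_0^t \tfrac{d_u(d_u-2)}{D_u(s)}\,ds\big)$. The exponential factor is strictly positive, so $g_i(t)$ retains the sign of $g_i(0)$ for every $t$, and hence so does $F_{e_i}$. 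This establishes both implications $F_{e_i}(0)\geq 0\Rightarrow F_{e_i}(t)\geq 0$ and $F_{e_i}(0)\leq 0\Rightarrow F_{e_i}(t)\leq 0$, after which $\frac{dw_i}{dt}=w_iF_{e_i}$ has constant sign and $w_i(t)$ is monotone.

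I expect the main obstacle to be the curvature computation: one must justify rigorously that the spread-out function with $f(x_j)-f(u)=-w_j$ is genuinely optimal, i.e. that no other $1$-Lipschitz function achieves a smaller value of $\nabla_{ux_i}\Delta f$, and one should confirm the coefficient $d_u(d_u-2)/D_u(s)$ is integrable on every finite interval. The latter is immediate since $w_j\leq 1$ forces $D_u\geq d_u$, bounding the integrand by $d_u-2$; the former follows from checking that the edge-constraint boundary is simultaneously compatible with all pairwise Lipschitz constraints on the star.
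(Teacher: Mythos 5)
Your proof is correct, and it reaches the conclusion by a genuinely different (and in fact shorter) route than the paper. Both arguments start from the same explicit formulas $\kappa_{ux_i}=1-\frac{d_u-2}{w_iD_u}$ and $F_{e_i}=\frac{d_u-2}{D_u}\bigl(\frac{1}{w_i}-d_u\bigr)$, which the paper states ``by calculation'' and which you justify via the Laplacian formulation (your optimality check for the function with $f(x_j)-f(u)=-w_j$ is the right one, since those values saturate the $u$--$x_j$ constraints while remaining compatible with every pairwise constraint $|f(x_i)-f(x_j)|\le w_i+w_j$). From there the paths diverge: the paper differentiates $F_{e}$ itself, obtains $F_{e}'=\bigl(-\frac{d_u-2}{w_{ux}D_u}-\frac{D_u'}{D_u}\bigr)F_{e}$, bounds the coefficient using the mediant inequality (its Lemma on ratios) to get $|F_e'|\le C|F_e|$, and then invokes a Hamilton-type sign-preservation corollary. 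You instead change variables to $g_i=1-d_uw_i$, observe that $w_i'=w_iF_{e_i}=\frac{(d_u-2)g_i}{D_u}$ turns into the exact linear equation $g_i'=-\frac{d_u(d_u-2)}{D_u(t)}g_i$, and integrate it explicitly; since $D_u\ge d_u$ keeps the coefficient bounded, the integrating factor is a strictly positive exponential and the sign of $g_i$ (hence of $F_{e_i}$, since $F_{e_i}=\frac{(d_u-2)g_i}{w_iD_u}$ with all other factors positive) is frozen. Your version buys two things: it dispenses entirely with the mediant inequality and the Hamilton lemma machinery, and it yields an explicit solution formula rather than just a sign statement. The paper's differential-inequality framework is more robust in the sense that it would survive if the ODE for $F_e$ were only approximately linear, but for the star graph that generality is not needed.
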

To prove above lemma, we need a result $|\frac{d F_e(t)}{d t}|\leq 
 C|F_e(t)|$ for all $t$,  where $C$ is  finite. This can be seen from the following  facts. 

\begin{lemma}[Hamilton's lemma]\label{lem:hamiltonlemma}\cite{hami1986}
Let $f$ be a locally Lipschitz continuous function on $[a, b]$.  (1): $f(a)\leq 0$, and when  $f\geq 0$  we have $df/dt \leq 0$, then $f(b)\leq 0$. 

Conversely, (2): let $f(a)\geq 0$, and when $f\leq 0$  we have $df/dt \geq 0$, then $f(b)\geq 0$.
\end{lemma}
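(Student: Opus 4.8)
The plan is to prove part (1) directly by a ``last sign change'' argument combined with the fundamental theorem of calculus, and then obtain part (2) for free by applying (1) to $-f$. The starting observation is that a function that is locally Lipschitz on the compact interval $[a,b]$ is in fact (globally) Lipschitz there, hence absolutely continuous and differentiable at almost every point; this is precisely the regularity needed to recover $f$ by integrating $f'$.

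For part (1), first I would suppose toward a contradiction that $f(b) > 0$. Set $S = \{t \in [a,b] : f(t) \le 0\}$, which is nonempty (it contains $a$, since $f(a)\le 0$) and closed by continuity of $f$, and let $t_0 = \sup S$. Since $f(b) > 0$ we have $b \notin S$, so $t_0 < b$; since $S$ is closed, $f(t_0) \le 0$; and since $f(t) > 0$ for every $t \in (t_0, b]$, continuity forces $f(t_0) = \lim_{t \to t_0^+} f(t) \ge 0$. Hence $f(t_0) = 0$ and $f > 0$ on all of $(t_0, b]$.

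On the interval $(t_0, b)$ we then have $f \ge 0$, so by hypothesis $f'(t) \le 0$ at every point of differentiability, which is almost every point. Absolute continuity gives $f(b) - f(t_0) = \int_{t_0}^b f'(t)\, dt \le 0$, so $f(b) \le f(t_0) = 0$, contradicting $f(b) > 0$. This proves $f(b) \le 0$. For part (2), I would apply part (1) to $g := -f$: the hypothesis $f(a) \ge 0$ becomes $g(a) \le 0$, and the implication ``$f \le 0 \Rightarrow f' \ge 0$'' becomes ``$g \ge 0 \Rightarrow g' \le 0$'', so part (1) yields $g(b) \le 0$, i.e. $f(b) \ge 0$.

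The only technical point requiring care — and the main, if modest, obstacle — is the passage from the pointwise differential hypothesis to the integral inequality: one must invoke that a Lipschitz function is absolutely continuous and differentiable almost everywhere, and note that the sign condition on $f'$ is used only almost everywhere on the set where $f > 0$, which is all that the fundamental theorem of calculus requires. Everything else is a clean application of the intermediate value / supremum argument and has no subtleties beyond this measure-theoretic bookkeeping.
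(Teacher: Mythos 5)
Your proof is correct, but it takes a genuinely different route from the paper's. The paper argues by contradiction via the auxiliary function $g(t)=e^{-t}f(t)$: if $f(b)>0$, then $g$ attains a positive maximum at some $t_0$ where $f(t_0)>0$, so $g'(t_0)=e^{-t_0}\bigl(f'(t_0)-f(t_0)\bigr)<0$, and a negative (upper) one-sided difference quotient at $t_0$ forces larger values of $g$ just to the left of $t_0$, contradicting maximality. That argument is purely pointwise and needs only Dini-type difference quotients, no integration. You instead locate the last zero $t_0=\sup\{t: f(t)\le 0\}$, observe $f(t_0)=0$ and $f>0$ on $(t_0,b]$, and integrate the a.e.\ inequality $f'\le 0$ over $[t_0,b]$ using the fact that a Lipschitz function on a compact interval is absolutely continuous. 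Your version pays for the measure-theoretic input (Lipschitz $\Rightarrow$ AC $\Rightarrow$ differentiable a.e.\ with a valid fundamental theorem of calculus), but in return it is robust to the derivative failing to exist at the critical point --- a gap the paper's proof quietly glosses over, since for a merely locally Lipschitz $f$ the quantity $\frac{df}{dt}(t_0)$ in the paper's argument need not exist --- and your deduction of part (2) by applying part (1) to $-f$ is cleaner than the paper's ``proof of item (2) is similar.'' One small point of care: the hypothesis ``when $f\ge 0$ we have $df/dt\le 0$'' should be read as holding at every point of differentiability in $\{f\ge 0\}$, which is exactly how you use it; with that reading your argument is complete.
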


\begin{proof}
We prove item (1). 
By contradiction, assume $f(b)>0$. We need an auxiliary function $g=e^{-x}f$ defined on $[a, b]$.   Since $f(b)>0$, then $g(b)>0$.  Consider the maximal point $t_0\in [a, b]$ of $g$, then $g(t_0)\geq g(b)>0$, thus $f(t_0)>0$. We have $dg/dt=-e^{-x}f+e^{-x}df/dt=e^{-x}(df/dt-f)$, and $\frac{dg}{dt}(t_0)$ is strictly less than $0$ as $f(t_0)$ is strictly greater than $0$ and $\frac{df}{dt}(t_0)\leq 0$. 

Note $g$ is a locally Lipschitz continuous function, then $\frac{dg}{dt}(t_0)<0$ means $\lim_{h\to 0} sup \frac{g(t_0+h)-g(t_0)}{h}<0$. Since $t_0\leq b$, thus for any $h<0$ such that $g(t_0+h)-g(t_0)>0$,  which is a contradiction to the maximal point $t_0$ of $g$.  

Proof of item (2) is similar. 
\end{proof}

The following uses Lemma \ref{lem:hamiltonlemma} directly. 
\begin{corollary}[Hamilton's Corollary]\label{coro:hamiltoncoro}
 Let $f$ be a locally Lipschitz continuous function on $[a, b]$. Let $c$ represent a finite positive value.  If  $|df/dt|\leq c|f|$,  then $f(b)\leq 0$ if $f(a)\leq 0$;   $f(b)\geq 0$ if $f(a)\geq 0$. 
\end{corollary}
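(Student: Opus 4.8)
The plan is to reduce the corollary to Hamilton's Lemma (Lemma \ref{lem:hamiltonlemma}) by a standard exponential rescaling that absorbs the constant $c$. The two-sided bound $|df/dt| \le c|f|$ is not directly in the sign-conditioned form that Hamilton's Lemma requires — when $f \ge 0$ it only yields $df/dt \le cf$ rather than $df/dt \le 0$ — so I would introduce the auxiliary function $g(t) = e^{-ct} f(t)$ on $[a,b]$ precisely to cancel the offending $cf$ term.

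First I would record that $g$ is locally Lipschitz on $[a,b]$, being the product of the smooth (hence Lipschitz on the compact interval) factor $e^{-ct}$ and the locally Lipschitz function $f$; this is exactly the regularity hypothesis demanded by Lemma \ref{lem:hamiltonlemma}. Wherever $f$ is differentiable — which is almost everywhere, and is all that the $\limsup$ of difference quotients in the proof of Lemma \ref{lem:hamiltonlemma} actually uses — one computes $g'(t) = e^{-ct}\big(f'(t) - c f(t)\big)$.

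For the first assertion, suppose $f(a) \le 0$, so that $g(a) = e^{-ca} f(a) \le 0$. At any point where $g \ge 0$ one has $f \ge 0$ (since $e^{-ct} > 0$), hence $|f| = f$ and $f' \le |f'| \le c|f| = cf$, which gives $g' = e^{-ct}(f' - cf) \le 0$. Thus $g$ satisfies the hypotheses of part (1) of Lemma \ref{lem:hamiltonlemma}, so $g(b) \le 0$ and therefore $f(b) = e^{cb} g(b) \le 0$. The second assertion is symmetric with the \emph{same} auxiliary function: if $f(a) \ge 0$ then $g(a) \ge 0$, and wherever $g \le 0$ we have $f \le 0$, so $|f| = -f$ and $f' \ge -|f'| \ge -c|f| = cf$, giving $g' \ge 0$; part (2) of Lemma \ref{lem:hamiltonlemma} then yields $g(b) \ge 0$ and hence $f(b) \ge 0$.

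There is no substantial analytic obstacle here — the entire content lies in the choice of the rescaling $g = e^{-ct}f$, which is what converts the two-sided differential inequality into the one-sided sign conditions consumed by Hamilton's Lemma. The only point requiring a little care is that $f$, and hence $g$, need only be differentiable almost everywhere, so the inequalities on $g'$ should be read as holding wherever the derivative exists; this matches exactly the upper-difference-quotient argument already used to establish Lemma \ref{lem:hamiltonlemma}, so no additional work beyond that lemma is needed.
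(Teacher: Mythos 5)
Your proposal is correct and follows essentially the same route as the paper: introduce the auxiliary function $g=e^{-ct}f$, observe that wherever $g\geq 0$ the two-sided bound yields $f'\leq cf$ and hence $g'\leq 0$, and then invoke Lemma \ref{lem:hamiltonlemma}. If anything, your case analysis (conditioning on the sign of $g$ rather than splitting on whether $f$ is ever positive) states the verification of the lemma's hypothesis more cleanly than the paper does.
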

\begin{proof}
We prove the first result.  Let $g=e^{-ct}f$, then $g(a)\leq 0$ as $f(a)\leq 0$ by condition. If for all $a\leq t<b$, $f(t)\leq 0$, then $f(b)\leq 0$ by continuity of $f$. Assume there exist $t<b$ such that $f(t)>0$, then  we have $\frac{df}{dt} \leq cf$. 
Further we have $\frac{dg}{dt}=-ce^{-ct}f + e^{-ct}\frac{df}{dt}  \leq 0$. By Lemma \ref{lem:hamiltonlemma}, $g(b)\leq 0$, thus $f(b)\leq 0$. 
\end{proof}

We need the following lemma in \cite{HM} to obtain a bound for expression $\frac{D_u^{\prime}(t)}{D_u(t)}$.  Its proof can be found in \cite{RV}. 
\begin{lemma}\label{lem:inequqqqqq}
 If q $_{1}, q_{2}, \ldots, q_{n}$ are positive numbers, then
$$
\min _{1 \leq i \leq n} \frac{p_{i}}{q_{i}} \leq \frac{p_{1}+p_{2}+\cdots+p_{n}}{q_{1}+q_{2}+\cdots+q_{n}} \leq \max _{1 \leq i \leq n} \frac{p_{i}}{q_{i}}
$$
for any real numbers $p_{1}, p_{2}, \ldots, p_{n} .$ Equality holds on either side if and only if all the ratios $p_{i} / q_{i}$ are equal.
\end{lemma}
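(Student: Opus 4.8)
The plan is to reduce the two-sided inequality to a per-term comparison, sum the per-term bounds, and then divide by the (strictly positive) sum of the $q_i$. Write $m = \min_{1 \leq i \leq n} p_i/q_i$ and $M = \max_{1 \leq i \leq n} p_i/q_i$. For each index $i$ the definitions of $m$ and $M$ give $m \leq p_i/q_i \leq M$, and since $q_i > 0$ I can clear denominators without reversing any inequality to obtain $m q_i \leq p_i \leq M q_i$.

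Next I would sum these termwise inequalities over $i = 1, \ldots, n$, which yields $m \sum_i q_i \leq \sum_i p_i \leq M \sum_i q_i$. Because every $q_i > 0$, the total $\sum_i q_i$ is strictly positive, so dividing through by it preserves the inequalities and produces exactly $m \leq \left(\sum_i p_i\right)/\left(\sum_i q_i\right) \leq M$, which is the claimed bound. Note that positivity of the $q_i$ is used in two distinct places — once to clear denominators termwise and once to divide by the positive total — whereas the $p_i$ are allowed to be arbitrary reals, consistent with the hypotheses.

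For the equality analysis I would recast the right-hand bound in nonnegative-summand form as $\sum_i \left(M q_i - p_i\right) \geq 0$, where each summand $M q_i - p_i$ is nonnegative precisely because $p_i/q_i \leq M$ and $q_i > 0$. Then equality $\left(\sum_i p_i\right)/\left(\sum_i q_i\right) = M$ is equivalent to $\sum_i \left(M q_i - p_i\right) = 0$, and a finite sum of nonnegative terms vanishes if and only if every term is zero, forcing $p_i = M q_i$, i.e. $p_i/q_i = M$, for all $i$. The symmetric argument applied to $\sum_i \left(p_i - m q_i\right) \geq 0$ handles the left-hand side. In either case all ratios equal the common extremal value, hence are all equal; conversely, if every ratio equals a common $c$ then $m = M = c$ and both bounds collapse to $c$.

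The main obstacle is essentially none, as this is the classical mediant (``ratio of sums'') inequality. The only points requiring care are the direction of the inequalities when clearing denominators — which is safe exactly because each $q_i$ is positive — and the short additional step in the equality discussion, namely the observation that a sum of nonnegative terms is zero only when each term is zero.
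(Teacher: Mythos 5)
Your proof is correct and complete: clearing denominators termwise using $q_i>0$, summing, and dividing by the positive total is exactly the standard argument for this mediant inequality, and your treatment of the equality case via a vanishing sum of nonnegative terms is sound. The paper itself does not prove this lemma but simply cites a reference for it, and the cited proof is the same classical one you give, so there is nothing to reconcile.
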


\begin{proof}[Proof of Lemma \ref{thm:alwaygreaterthan0}]
 By calculation, for any edge $e=ux$ we have  $\kappa_{ux}=1+\frac{2-d_u}{w_{ux}D_u}$
and $F_{ux}=\frac{d_u-2}{D_u}(\frac{1}{w_{ux}}-d_u).$
Since  $\frac{1}{w_{ux}D_u}<1, \frac{1}{D_u}<\frac{1}{d_u}$, then $F_{ux}$ is bounded at any finite time. 

The derivative of $F_{ux}(t)$ respect to $t$ is
\begin{align*}
    F_{ux}^{\prime}&= -\frac{(d_u-2)w_{ux}^{\prime}}{w_{ux}^2D_u}-\frac{D_u^{\prime}}{D_u}\frac{d_u-2}{D_u}(\frac{1}{w_{ux}}-d_u)\\
    &=-\frac{(d_u-2)w_{ux}F_{ux}}{w_{ux}^2D_u}-\frac{D_u^{\prime}}{D_u}F_{ux}\\
    &=(-\frac{(d_u-2)}{w_{ux}D_u}-\frac{D_u^{\prime}}{D_u})F_{ux}. 
\end{align*}

Using Lemma \ref{lem:inequqqqqq}, we get 
\begin{align*}
    |\frac{D_u^{\prime}}{D_u}| &=|\frac{\sum_{z\sim u}- \frac{w_{uz}^{\prime}}{w_{uz}^2} }{\sum_{z\sim u}\frac{1}{w_{uz}}}|=|\frac{\sum_{z\sim u} \frac{F_{uz}}{w_{uz}} }{\sum_{z\sim u}\frac{1}{w_{uz}}}|\leq \frac{\sum_{z\sim u} \frac{|F_{uz}|}{w_{uz}} }{\sum_{z\sim u}\frac{1}{w_{uz}}}\leq \max_{z\sim u}{ |F_{uz}|}.
\end{align*}

Thus 
\begin{align}\label{equ:FeleqCFe}
     |F_{ux}^{\prime}(t)|\leq (d_u-2+ \max_{z\sim u}{ |F_{uz}|}) |F_{ux}|. 
\end{align}

Let $C=(d_u-2+ \max_{z\sim u}{ |F_{uz}|})$, clearly, it is a finite number.
Since $F_e(t)$ is differentiable and $F_e^{\prime}(t)$ is bounded, by Hamilton's Corollary \ref{coro:hamiltoncoro},  if $F_{e}(0)\geq 0$, then $F_{e}(t)\geq 0$ for all $t>0$;  if $F_{e}(0)\leq 0$, then $F_{e}(t)\leq 0$ for all $t>0$.   Thus $w_e(t)$ is a  monotone function over time $t\in[0, \infty)$.

\end{proof}

\begin{lemma}
[Barbalat's Lemma]\cite{barlem}\label{lem:barlem}
If $f(t)$ has a finite limit as $t\to \infty$ and if $f^{\prime}(t)$ is uniformly continuous (or $f^{\prime\prime}(t)$ is bounded), then $\lim_{t\to\infty} f^{\prime}(t)=0$.  
\end{lemma}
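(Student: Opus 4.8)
The plan is to argue by contradiction, converting the existence of the finite limit into a quantitative obstruction via uniform continuity of $f'$. Write $L=\lim_{t\to\infty}f(t)$, which exists and is finite by hypothesis. Suppose, for contradiction, that $f'(t)\not\to 0$ as $t\to\infty$. Then there is some $\varepsilon>0$ and a sequence $t_n\to\infty$ with $|f'(t_n)|\geq\varepsilon$ for every $n$. The goal is to show that the increments of $f$ over short intervals anchored at the $t_n$ are bounded away from zero, which is incompatible with $f$ being convergent.

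First I would invoke uniform continuity of $f'$ to produce a single $\delta>0$, independent of $n$, such that $|f'(s)-f'(t)|<\varepsilon/2$ whenever $|s-t|<\delta$. The crucial consequence is that on each interval $I_n=[t_n,t_n+\delta]$ the derivative both stays bounded away from zero and keeps a fixed sign: for $s\in I_n$ we have $|f'(s)|\geq|f'(t_n)|-|f'(s)-f'(t_n)|>\varepsilon/2$, and since $f'$ cannot move by as much as $\varepsilon$ across $I_n$ while $|f'(t_n)|\geq\varepsilon$, it cannot cross zero there. This sign-preservation is exactly what lets me replace the signed integral of $f'$ by the integral of $|f'|$, and I expect it to be the main (and only) delicate point of the argument.

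Next I would apply the fundamental theorem of calculus on $I_n$. Because $f'$ has constant sign on $I_n$,
$$|f(t_n+\delta)-f(t_n)|=\left|\int_{t_n}^{t_n+\delta}f'(s)\,ds\right|=\int_{t_n}^{t_n+\delta}|f'(s)|\,ds\geq\frac{\varepsilon\delta}{2}.$$
On the other hand, since $f$ converges to the finite limit $L$, the Cauchy criterion gives $|f(t_n+\delta)-f(t_n)|\leq|f(t_n+\delta)-L|+|L-f(t_n)|\to 0$ as $n\to\infty$, because both $t_n$ and $t_n+\delta$ tend to infinity. This contradicts the uniform lower bound $\varepsilon\delta/2>0$. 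Hence no such $\varepsilon$ and sequence can exist, and $\lim_{t\to\infty}f'(t)=0$.

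Finally, to dispatch the parenthetical hypothesis that $f''$ is bounded, say $|f''|\leq M$, I would note that the mean value theorem yields $|f'(s)-f'(t)|\leq M|s-t|$, so $f'$ is Lipschitz and therefore uniformly continuous; the case already established then applies verbatim. Thus the statement follows under either hypothesis, and the only step requiring care is the sign-preservation claim on $I_n$ -- without it one controls merely the signed increment of $f$, which need not be bounded below, but the claim follows cleanly from tying the choice of $\delta$ to the same tolerance $\varepsilon/2$.
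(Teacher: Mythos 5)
Your proof is correct: this is the standard contradiction argument for Barbalat's Lemma, and all the key points are handled properly --- the uniform-continuity radius $\delta$ tied to the tolerance $\varepsilon/2$, the sign-preservation of $f'$ on $[t_n,t_n+\delta]$ (which justifies replacing the signed integral by $\int|f'|$), the Cauchy criterion for the convergent $f$, and the reduction of the bounded-$f''$ case to the uniformly continuous case via the mean value theorem. Note that the paper itself offers no proof of this statement --- it is quoted as a known result from the cited reference --- so there is nothing to compare against; your argument stands on its own and matches the classical one.
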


\begin{proof}[Proof of Theorem \ref{thm:starconverge}]
By Lemma \ref{thm:alwaygreaterthan0},  we have  $\lim_{t\to \infty}w_e (t)$ exists and is finite, both $w_e(t)$ and $F_e(t)$ are uniformly continuous, thus $w_e^{\prime}(t)=w_eF_e$ is uniformly continuous.  By Barbalat's Lemma \ref{lem:barlem},
 $\lim_{t\to \infty}w_e^{\prime}(t)=0$. 
 
 Note 
 $\lim_{t\to \infty}w_e(t)= 0$ if and only if $F_e(t)$ is negative for large $t$, equivalently, $w_e(t)>\frac{1}{d_u}$ a contradiction. 
 
 Thus, it must be $\lim_{t\to \infty} F_e(t)=0$, implies $\lim_{t\to \infty} w_e(t)=\frac{1}{d_u}$. Therefore, the Ricci flow (\ref{eq:stochastic_continuous}) on star graph $S_n$ with $n\geq 3$ converges to constant-weighted star of same size. 
\end{proof}

\section{Conclusions}
In this study, we propose an normalized continuous Ricci flow for weighted graphs,  based on Ollivier-Lin-Lu-Yau Ricci curvature and prove that the Ricci flow metric $X(t)$ with initial data $X(0)$ exists and is unique for all $t\geq 0$ by fixing the violation of distance condition. 
We also show some explicit, rigorous examples of Ricci flows on tree graphs. 
Future work already underway, we expect results of more general Ricci flows evolved on various graphs.

\bibliographystyle{plain}
\bibliography{citations}

\begin{thebibliography}{10}

\bibitem{blhy}
S.~Bai, A.~Huang, L.~Lu, and S.T. Yau.
\newblock On the sum of ricci-curvatures for weighted graphs.
\newblock {\em Pure Appl. Math. Q.}, 17(5):1599--1617, 2021.

\bibitem{barlem}
I.~Barb\u{a}lat.
\newblock Systèmes d'équations différentielles d'oscillations non
  linéaires.
\newblock {\em Rev. Math. Pures Appl}, 4:267--270, 1959.

\bibitem{BCLMP}
D.~Bourne, D.~Cushing, S.~Liu, F.~Münch, and N.~Peyerimhoff.
\newblock Ollivier--ricci idleness functions of graphs.
\newblock {\em SIAM J. Discrete Math}, 32, 04 2017.

\bibitem{spheretheo}
S.~Brendle and R.~Schoen.
\newblock Manifolds with $^{1/4}$-pinched curvature are space forms.
\newblock {\em J. Amer. Math. Soc.}, 22:287--307, 2009.

\bibitem{bookODE}
R.~Fang, D.and~Xue.
\newblock {\em Ordinary Differential Equation}.
\newblock Higher Education Press, 2017.

\bibitem{hami1982}
R.~Hamilton.
\newblock Three-manifolds with positive ricci curvature.
\newblock {\em J. Differ. Geom}, 17:255--362, 06 1982.

\bibitem{hami1986}
R.~Hamilton.
\newblock Four-manifolds with positive curvature operator.
\newblock {\em J. Differ. Geom}, 24(2):153--179, 1986.

\bibitem{LLY}
Y.~Lin, L.~Lu, and S.T. Yau.
\newblock Ricci curvature of graphs.
\newblock {\em Tohoku Math. J.}, 63, 12 2011.

\bibitem{HM}
H.~Minc.
\newblock Nonnegative matrices.
\newblock {\em Wiley, New York}, 1988.

\bibitem{MW}
Florentin Münch and Radoslaw K.~Wojciechowski.
\newblock Ollivier ricci curvature for general graph laplacians: Heat equation,
  laplacian comparison, non-explosion and diameter bounds.
\newblock {\em Adv. Math.}, 356, 11 2019.

\bibitem{NLLG}
Chien-Chun Ni, Yu-Yao Lin, Feng Luo, and Jie Gao.
\newblock Community detection on networks with ricci flow.
\newblock {\em Sci. Rep}, 9, 07 2019.

\bibitem{Ollivier}
Y.~Ollivier.
\newblock Ricci curvature of markov chains on metric spaces.
\newblock {\em J. Funct. Anal.}, 256:810--864, 02 2009.

\bibitem{Ollivier1}
Yann Ollivier.
\newblock Ricci curvature of metric spaces.
\newblock {\em C. R. Math.}, 345(11):643 -- 646, 2007.

\bibitem{perelman2002entropy}
G.~Perelman.
\newblock The entropy formula for the ricci flow and its geometric
  applications.
\newblock {\em arXiv preprint math/0211159}, 2002.

\bibitem{RV}
R.~Varga.
\newblock Matrix iterative analysis.
\newblock {\em Prentice-Hall, Englewood Cliffs, NJ}, 1962.

\end{thebibliography}

\vspace{1cm}

Yanqi Lake Beijing Institute of Mathematical Sciences and Applications, Beijing, 101408, China. 

\textit{Email address}: \textbf{sbai@seu.edu.cn}
\vspace{0.3cm}

Yau Mathematical Science Center, Tsinghua University, Beijing, 100084, China; Department of Mathematics,  Tsinghua University, Beijing, 100084, China.

\textit{Email address}: \textbf{yonglin@tsinghua.edu.cn} 
\vspace{0.3cm}

University of South Carolina, Columbia, SC 29208, USA.

\textit{Email address}: \textbf{lu@math.sc.edu}

\vspace{0.3cm}

Georgia Institute of Technology, Atlanta, GA, 30332, USA.

\textit{Email address}: \textbf{zwang672@gatech.edu}

\vspace{0.3cm}

Yau Mathematical Sciences Center, Tsinghua University, Beijing, 100084, China; Yanqi Lake Beijing Institute of Mathematical Sciences and Applications, Beijing, 101408, China.

\textit{Email address}: \textbf{styau@tsinghua.edu.cn, yau@math.harvard.edu}

\end{document}